\newtheorem{lem}{Lemma}
\theoremstyle{definition}
\newtheorem*{rem*}{Remark}
\newcommand{\R}{\mathbb{R}}
\newcommand{\RR}{{\rm I}\kern-0.18em{\rm R}}
\newcommand{\h}{{\rm I}\kern-0.18em{\rm H}}
\newcommand{\K}{{\rm I}\kern-0.18em{\rm K}}
\newcommand{\indic}{\mathds{1}} 
\newcommand{\boldx}{\bold x} 
\newcommand{\boldy}{\bold y} 
\newcommand{\boldW}{\bold W} 
\newcommand{\boldX}{\bold X} 
\newcommand{\boldY}{\bold Y} 
\newcommand{\boldZ}{\bold Z} 
\newcommand{\boldV}{\bold V} 
\newcommand{\boldB}{\bold B} 
\newcommand{\boldM}{\bold M} 
\newcommand{\boldS}{\bold S} 
\newcommand{\boldN}{\bold N} 
\newcommand{\boldT}{\bold T} 
\newcommand{\boldP}{\bold P} 
\newcommand{\boldp}{\bold p} 
\newcommand{\boldzeta}{\boldsymbol \zeta} 
\newcommand{\boldone}{\bold 1} 
\newcommand{\boldSigma}{\bold \Sigma} 
\DeclareMathOperator{\Diag}{Diag} 
\renewcommand{\P}{\mathbb{P}} 
\newcommand{\E}{\mathbb{E}} 
\DeclareMathOperator{\cov}{Cov} 
\newcommand{\B}{\mathcal{B}}
\numberwithin{equation}{section}
\newcommand{\blind}{1}
\begin{document}
	
\def\spacingset#1{\renewcommand{\baselinestretch}%
{#1}\small\normalsize} \spacingset{1}
\spacingset{1.5} 

\if1\blind
{
	\title{Supplementary material for\\\emph{Multivariate sparse clustering for extremes}}
  \author{Nicolas Meyer\thanks{nicolas.meyer@umontpellier.fr (corresponding author)}\hspace{.2cm}\\
	IMAG, Univ. Montpellier, CNRS, Montpellier, France\\
	LEMON, Inria, Montpellier, France\\
	and \\
	Olivier Wintenberger\thanks{olivier.wintenberger@sorbonne-universite.fr}\\
	Sorbonne Universit\'e, LPSM, F-75005, Paris, France\\
	Wolfgang Pauli Institut, c/o Fakultät für Mathematik,\\
	Universität Wien, 1090 Vienna, Austria }
\maketitle
} \fi

\if0\blind
{
	\bigskip
	\bigskip
	\bigskip
	\begin{center}
		{\LARGE\bf Supplementary material for\\\emph{Multivariate sparse clustering for extremes}}
	\end{center}
	\medskip
} \fi

The supplementary material for this paper contains proofs and some details on our procedure. We also apply the MUSCLE algorithm to a numerical example related to asymptotic independence and wind speed data.

\section{Proofs}

This section contains the proofs of the results introduced in the main text. We start by a result which will be of constant use in what follows.

\subsection{A consequence of Assumption \ref{main-ass:exponent_measure}}\label{proof-subsec:assumption1}

Assumption \ref{main-ass:exponent_measure} allows us to deal with the limit of $\P(\boldX/u_n \in A(x,\beta))$ as $n \to \infty$. Indeed, this assumption implies the following uniform convergence over $x \in [\frac{1}{1+\tau},1+\tau]$, $\tau > 0$:
\begin{equation}\label{eq:uniform_reg_var}
	\dfrac{n}{k}\dfrac{\P(\boldX/u_n \in A(x,\beta))}{p_n(\beta)}
	\to x^{\alpha(\beta)}\,, \quad n \to \infty\,.
\end{equation}
To prove this convergence we use the equivalence established by \cite{meyer_wintenberger}:
\begin{equation}\label{eq:equiv_proj_C_beta}
	\pi(\boldx)\in C_\beta \quad \text{if and only if} \quad \bigg\{
	\begin{array}{ll}
		&\max_{i \in \beta} \boldx_{\beta, \, i} < 1\,,\\
		&\min_{i \in \beta^c} \boldx_{\beta, \, i} \geq 1\,,
	\end{array}
\end{equation}
where $\boldx_{\beta, \, i} = \sum_{j \in \beta} (x_j - x_i)$. The regular variation assumption and the homogeneity property of $\mu_\beta$ then yield
\begin{align*}
	\dfrac{\P(\boldX/u_n \in A(x,\beta))}{\P(\min_{i \in \beta^c} \boldX_{\beta, \, i} \geq u_n)}
	&= \dfrac{\P(\max_{i \in \beta} x \boldX_{\beta, \, i} < u_n,\, \min_{i \in \beta^c} x \boldX_{\beta, \, i} \geq u_n)}{\P(\min_{i \in \beta^c} \boldX_{\beta, \, i} \geq u_n)}\\
	&\xrightarrow[n \to \infty]{} \mu_\beta(\{\boldx \in \R^d_+ : \max_{i \in \beta} x \boldx_{\beta, \, i} < 1,\, \min_{i \in \beta^c} x \boldx_{\beta, \, i} \geq 1\})\\
	&= x^{\alpha(\beta)} \mu_\beta(\{\boldx \in \R^d_+ : \max_{i \in \beta} \boldx_{\beta, \, i} < 1,\, \min_{i \in \beta^c} \boldx_{\beta, \, i} \geq 1\})\,,
\end{align*}
and this limit is positive by Assumption \ref{main-ass:exponent_measure}. This implies in particular that the function $t \mapsto \P(\boldX/t \in A(1, \beta))= \P(\boldX \in A(t^{-1}, \beta))$ is regularly varying with index $-\alpha(\beta)$. We conclude by writing down
\[
\dfrac{n}{k}\dfrac{\P(\boldX/u_n \in A(x,\beta))}{p_n(\beta)}
\sim \dfrac{\P(x \boldX/u_n \in A(1,\beta))}{\P(\boldX/u_n \in A(1,\beta))}
\to x^{\alpha(\beta)}\,, \quad n \to \infty\,.
\]
%

\begin{rem*}
	For $\beta \in \mathcal{S}^*(\boldZ)$, the convergence is automatically satisfied since
	\[
	\dfrac{n}{k}\dfrac{\P(\boldX/u_n \in A(x,\beta))}{p_n(\beta)}
	= \dfrac{\P(\pi(x\boldX/u_n) \in C_\beta \mid x|\boldX|> u_n)}{p_n(\beta)} \dfrac{\P(x|\boldX| > u_n)}{\P(|\boldX| > u_n)}
	\xrightarrow[n \to \infty]{} \dfrac{p^*(\beta)}{p^*(\beta)} x^{\alpha} = x^\alpha\,,
	\]
	where $\alpha$ is the tail index of $\boldX$, see Equation \eqref{main-eq:conv_reg_var_tail_measure} in the main text.
\end{rem*}

\subsection{Proof of Lemma \ref{main-lem:proba_T_n_beta_zero}}
In order to prove the convergence
\[
\dfrac{\log \P(T_{n,k}(\beta)=0)}{-k p_n(\beta)} \to 1\,, \quad n \to \infty\,,
\]
we prove that for $\tau > 0$,
\begin{equation}\label{eq:proof_unif_conv_Taylor}
	\sup_{x \in [\frac{1}{1+\tau}, 1+\tau]}
		\bigg| \dfrac{\log \P(T_{n}(x, \beta)=0)}{-k p_n(\beta)} -x^{\alpha(\beta)} \bigg| \to 0\,, \quad n \to \infty\,,
\end{equation}
Then, letting $x = x_n = u_n / |\boldX|_{(k+1)}$ in \eqref{eq:proof_unif_conv_Taylor} gives the desired result since $x_n \to 1$ in probability.

First we notice that
\begin{align*}
\P(T_n(x, \beta) = 0)
&= \P (\pi(x \boldX_j / u_n) \notin C_\beta \text{ or } x|\boldX_j| \leq u_n,\, j = 1, \ldots, n)\\
&= [1- \P (\boldX/u_n \in A(x,\beta)) ]^n\\
&= \exp \big( n \log [1- \P (\boldX/u_n \in A(x,\beta)) ] \big)\,.
\end{align*}
Therefore, the convergence \eqref{eq:proof_unif_conv_Taylor} is equivalent to
\begin{equation}\label{eq:proof_unif_conv_Taylor2}
\sup_{x \in [\frac{1}{1+\tau}, 1+\tau]}
	\bigg| \dfrac{n \log [1- \P (\boldX/u_n \in A(x,\beta)) ]}{-k p_n(\beta)} -x^{\alpha(\beta)} \bigg| \to 0\,, \quad n \to \infty\,.
\end{equation}
We decompose this quantity as follows:
\begin{align*}
	\bigg| &\dfrac{n \log [1- \P (\boldX/u_n \in A(x,\beta)) ]}{-k p_n(\beta)} -x^{\alpha(\beta)} \bigg|\\
	&\leq
	\bigg| \dfrac{n \P (\boldX/u_n \in A(x,\beta))}{k p_n(\beta)} -x^{\alpha(\beta)} \bigg|
	\bigg| \dfrac{ - \log [1- \P (\boldX/u_n \in A(x,\beta)) ]}{ \P (\boldX/u_n \in A(x,\beta))} \bigg|\\
	&+
	\bigg| \dfrac{ -\log [1- \P (\boldX/u_n \in A(x,\beta)) ]}{\P (\boldX/u_n \in A(x,\beta))} - 1 \bigg| |x^{\alpha(\beta)}|\end{align*}
Regarding the second term, we first notice that on $[\frac{1}{1+\tau}, 1+\tau]$ we have $ |x^{\alpha(\beta)}| \leq (1+\tau)^{\alpha(\beta)}$. Besides, Taylor's inequality ensures that $|\log(1+x) - x| \leq |x|^2/6$ for any $x > -1$. This implies that
\[
\bigg| \dfrac{ -\log [1- \P (\boldX/u_n \in A(x,\beta)) ]}{\P (\boldX/u_n \in A(x,\beta))} - 1 \bigg|
\leq \dfrac{1}{6}
\P (\boldX/u_n \in A(x,\beta))\,.
\]
and this probability converges uniformly to zero by \eqref{eq:uniform_reg_var}.
Finally, we know by \eqref{eq:uniform_reg_var} that 
\[
\bigg| \dfrac{n \P (\boldX/u_n \in A(x,\beta))}{k p_n(\beta)} -x^{\alpha(\beta)} \bigg|\,,
\]
also converges uniformly to zero. This proves \eqref{eq:proof_unif_conv_Taylor2} and concludes the proof.

\subsection{Proof of Theorem \ref{main-theo:CLT}}
We define the class of sets
\[
\mathcal{U}(\mathcal{S}_\infty)
= \big\{ C_{\beta_1} \cup \cdots \cup C_{\beta_l},\, \text{ with distinct } \beta_1, \ldots, \beta_l \in \mathcal{S}_\infty,\, l = 1, \ldots, s_\infty \big\}\,,
\]
whose  cardinality is $2^{s_\infty}-1$. Consequently, for $\mathsf{C} = C_{\beta_1} \cup \cdots \cup C_{\beta_l} \in \mathcal{U}(\mathcal{S}_\infty)$ the set $A(x,\mathsf{C}) := \{\boldy \in \R^d_+ : x |\boldy| > 1, \pi(x \boldy) \in \mathsf{C} \}$ satisfies
\[
T_n(x,\mathsf{C})
:= \sum_{j=1}^n \indic_{\{\boldX/u_n \in A(x,\mathsf{C})\}}
= \sum_{j=1}^n \indic_{\{x |\boldX| > u_n\}} \Big( \sum_{j=1}^l \indic_{\{\pi(x\boldX/u_n) \in C_{\beta_j}\}} \Big)\,,
\]
since the unions which appear in $\mathsf{C}$ are disjoint ones. We also define
\[
p_n(\mathsf{C}) = \P( \pi(\boldX/u_n) \in \mathsf{C} \mid |\boldX| > u_n)\,.
\]
With such definitions if $\mathsf{C} = C_{\beta}$ then we get $T_n(x,\mathsf{C}) = T_n(x,\beta)$ and $p_n(\mathsf{C}) = p_n(\beta)$.

In order to prove Theorem \ref{main-theo:CLT} we establish the convergence in $\ell^\infty([\frac{1}{1+\tau}, 1 +\tau])$ of the process
\begin{equation}\label{eq:process_union}
\bigg( \dfrac{ T_{n}(x, \mathsf{C}) }{k \sqrt{p_n(\mathsf{C})}} - \dfrac{n}{k \sqrt{p_n(\mathsf{C})}} \P\Big(\frac{\boldX}{u_n} \in A(x,\mathsf{C})\Big) \bigg)_{\mathsf{C} \in \mathcal{U}(\mathcal{S}_\infty)}\,,
\end{equation}
under a suitable normalization. Theorem \ref{main-theo:CLT} is then just a consequence of this convergence with a specific choice of subsets $\mathsf{C}$.

To prove the convergence of the process \eqref{eq:process_union} we proceed as usual by first establishing the convergence of all finite dimensional marginal distributions and then the asymptotic equicontinuity of the process.

\noindent 1. \underline{Convergence of the marginals}

We prove the convergence of \eqref{eq:process_union} under a suitable normalization for a fixed $x \in [\frac{1}{1+\tau}, 1+\tau]$. We consider the triangular array $\boldV_n^{1}(x), \ldots, \boldV_n^{n}(x)$, with generic distribution $\boldV_n(x) \in \R^{2^{s_\infty}-1}$, where $\boldV_n^{i}(x)$ has components
\[
\dfrac{\sqrt{n} \indic_{\{\boldX_i / u_n \in A(x,\mathsf{C})\}}}{\sqrt{k p_n(\mathsf{C})}}\,, \quad \mathsf{C} \in \mathcal{U}(\mathcal{S}_\infty)\,.
\]
Let $\boldSigma(\boldV_n(x)) \in \R^{2^{s_\infty}-1} \times \R^{2^{s_\infty}-1}$ be the covariance matrix of $\boldV_n(x)$. For $\mathsf{C},\mathsf{C}' \in \mathcal{U}(\mathcal{S}_\infty)$, we have
\begin{align*}
\boldSigma(&\boldV_n(x))_{\mathsf{C}, \mathsf{C}'}\\
&=
\cov\Big(\dfrac{\sqrt{n} \indic_{\{\boldX / u_n \in A(x,\mathsf{C})\}}}{\sqrt{k p_n(\mathsf{C})}}, \dfrac{\sqrt{n} \indic_{\{\boldX / u_n \in A(x,\mathsf{C}')\}}}{\sqrt{k p_n(\mathsf{C}')}} \Big)\\
&= \dfrac{n}{k} \dfrac{\E \big[  \indic_{\{\boldX / u_n \in A(x,\mathsf{C})\}}  \indic_{\{\boldX / u_n \in A(x,\mathsf{C}')\}} \big]}{\sqrt{p_n(\mathsf{C})} \sqrt{p_n(\mathsf{C}')}}
- \dfrac{n}{k} \dfrac{\P(\boldX / u_n \in A(x,\mathsf{C})) \P(\boldX / u_n \in A(x,\mathsf{C}'))}{\sqrt{p_n(\mathsf{C})} \sqrt{p_n(\mathsf{C}')}}\,.
\end{align*}
If we write $\mathsf{C} = C_{\beta_1} \cup \cdots \cup C_{\beta_l}$ and $\mathsf{C}' = C_{\beta'_1} \cup \cdots \cup C_{\beta'_{l'}}$, with disjoint unions, then the convergence \eqref{eq:uniform_reg_var} ensures that
\[
\P(\boldX / u_n \in A(x,\mathsf{C}))
= \sum_{j=1}^l \P(\boldX / u_n \in A(x,\beta_j))
= \frac{k}{n} \sum_{j=1}^l p_n(\beta_j) x^{\alpha(\beta_j)} (1+o(1))\,,
\]
and similarly for $\mathsf{C}'$. This implies that the second term in the decomposition of $\boldSigma(\boldV_n(x))_{\mathsf{C}, \mathsf{C}'}$ satisfies
\[
\dfrac{n}{k} \dfrac{\P(\boldX / u_n \in A(x,\mathsf{C})) \P(\boldX / u_n \in A(x,\mathsf{C}'))}{\sqrt{p_n(\mathsf{C})} \sqrt{p_n(\mathsf{C}')}}
= \dfrac{k}{n} \dfrac{\sum_{j=1}^l p_n(\beta_j) x^{\alpha(\beta_j)} \sum_{i=1}^{l'} p_n(\beta_i) x^{\alpha(\beta'_i)}}{\sqrt{p_n(\mathsf{C})} \sqrt{p_n(\mathsf{C}')}} (1 + o(1))\,.
\]
Regarding the first term, it is non-null if and only if there are common subsets in $\mathsf{C}$ and $\mathsf{C}'$. We use again \eqref{eq:uniform_reg_var}  and obtain
\begin{align*}
\dfrac{n}{k} \dfrac{\E \big[ \indic_{\{\boldX / u_n \in A(x,\mathsf{C})\}}  \indic_{\{\boldX / u_n \in A(x,\mathsf{C}')\}} \big]}{\sqrt{p_n(\mathsf{C})} \sqrt{p_n(\mathsf{C}')}}
&= \dfrac{n}{k} \dfrac{\sum_{\beta : C_\beta \subset \mathsf{C} \cap \mathsf{C}'} \P(\boldX / u_n \in A(x,\beta))}{\sqrt{p_n(\mathsf{C})} \sqrt{p_n(\mathsf{C}')}}\\
&= \dfrac{ \sum_{\beta : C_\beta \subset \mathsf{C} \cap \mathsf{C}'} p_n(\beta) x^{\alpha(\beta)}}{\sqrt{p_n(\mathsf{C})} \sqrt{p_n(\mathsf{C}')}} (1+o(1))\,,
\end{align*}
where the sum is equal to zero if the intersection $\mathsf{C} \cap \mathsf{C}'$ is empty. This proves that the covariance matrix $\boldSigma(\boldV_n(x))$ has the same asymptotic behavior as the matrix $\widetilde{\boldS}_{n}(x)$ defined by
\begin{equation}\label{eq:covariance_matrix_main_theorem}
\widetilde{\boldS}_{n}(x)_{\mathsf{C}, \mathsf{C}'}
=
\left\{
\begin{array}{cc}
\dfrac{\sum_{\beta : C_\beta \subset \mathsf{C} \cap \mathsf{C}'} p_n(\beta) x^{\alpha(\beta)}}{\sqrt{ p_n(\mathsf{C}) } \sqrt{ p_n(\mathsf{C}') }} & \text{ for } \mathsf{C} \cap \mathsf{C}' \neq \emptyset\,, \\
- \dfrac{k}{n}\dfrac{\sum_{\beta : C_\beta \subset \mathsf{C}} p_n(\beta) x^{\alpha(\beta)} \sum_{\beta' : C_{\beta'} \subset \mathsf{C}'} p_n(\beta') x^{\alpha(\beta')} }{ \sqrt{ p_n(\mathsf{C}) } \sqrt{ p_n(\mathsf{C}') } } & \text{ for } \mathsf{C} \cap \mathsf{C}' = \emptyset \,.
\end{array}
\right.
\end{equation}


Since we assume that the $\beta$'s are in $\mathcal{S}_\infty$, the Lindeberg-Feller's condition is satisfied:
\begin{align}
	&\dfrac{1}{n} \sum_{i=1}^n
	\E\bigg[ |\boldV_n^{i}(x) - \E[\boldV_n^{i}(x)]|_\infty^2 \indic_{|\boldV_n^{i}(x) - \E[\boldV_n^{i}(x)]|_\infty > \epsilon \sqrt{n}} \bigg] \label{eq:linderberg}\\
	&= \E\bigg[ |\boldV_n(x) - \E[\boldV_n(x)]|_\infty^2 \indic{\Big\{ \sup_{\mathsf{C} \in \mathcal{U}(\mathcal{S}_\infty)} |\indic_{\{\boldX/u_n \in A(x, \mathsf{C})\}} - \P(\boldX/u_n \in A(x, \mathsf{C}))| > \epsilon \sqrt{k p_n(\mathsf{C})} \Big\}} \bigg]\nonumber\\
	&\to 0\,, \nonumber
\end{align}
where $|\cdot|_\infty$ denotes the $\ell^\infty$-norm. Indeed, the inequality in the indicator function is never satisfied for $n$ large enough since the left-hand side is bounded by $1$ while the right-hand side diverges to infinity for any $\epsilon > 0$ as all clusters belong to $\mathcal{S}_\infty$.

By Lindeberg-Feller's theorem we obtain the weak convergence
\[
(n \widetilde{\boldS}_{n}(x))^{-1/2} \sum_{i=1}^n \Big\{ \boldV_n^{i}(x) - \E[\boldV^i_n(x)] \Big\} \stackrel{d}{\to} (N(\mathsf{C}))_{\mathsf{C} \in \mathcal{U}(\mathcal{S}_\infty)}\,,
\]
where $(N(\mathsf{C}))_{\mathsf{C} \in \mathcal{U}(\mathcal{S}_\infty)}$ is a standard centered Gaussian random vector. This convergence can be rewritten as
\begin{equation} \label{eq:cv_TCL_proof}
	\sqrt k (\widetilde{\boldS}_{n}(x))^{-1/2} \bigg( \dfrac{ T_{n}(x, \mathsf{C}) }{k \sqrt{p_n(\mathsf{C})}} - \dfrac{n  \P\big(\boldX / u_n \in A(x,\mathsf{C})\big) }{k \sqrt{p_n(\mathsf{C})}} \bigg)_{\mathsf{C} \in \mathcal{U}(\mathcal{S}_\infty)}
	\overset{d}{\to}
	(N(\mathsf{C}))_{\mathsf{C} \in \mathcal{U}(\mathcal{S}_\infty)}\,.
\end{equation}

\noindent 2. \underline{Equicontinuity}

In order to prove that the convergence in \eqref{eq:cv_TCL_proof} holds uniformly over $x \in [\frac{1}{1+\tau}, 1 + \tau]$ it suffices to establish the equicontinuity of the process $\{T_n(x, \mathsf{C})\}_{x, \mathsf{C}}$ in $\ell^\infty([\frac{1}{1+\tau}, 1+\tau] \times \mathcal{U}_\infty)$, see Theorem 2.11.9 by \cite{vandervaart_wellner_96}. We consider the totally bounded semi-metric space $[\frac{1}{1+\tau}, 1+\tau] \times \mathcal{U}_\infty$ equipped with the semi-metric $\rho$ defined by
\[
\rho((x, \mathsf{C}), (x', \mathsf{C}')) = \max_{\gamma \in \mathcal{S}_\infty} \Big(x^{\alpha(\gamma)} \vee {x'}^{\alpha(\gamma)} - x^{\alpha(\gamma)} \wedge {x'}^{\alpha(\gamma)}\Big) + \indic_{\mathsf{C} \neq \mathsf{C}'}\,.
\]
We check that the three assumptions given in Theorem 2.11.9 in \cite{vandervaart_wellner_96} are satisfied.

\noindent 2.a. \underline{The Lindeberg condition:} It is similar to the one established in the first part of the proof.

\noindent 2.b. \underline{The bracketing condition:} Recall that the vector $\boldV_n(x) \in \R^{2^{s_\infty}-1}$ has components
\[
V_n(x, \mathsf{C}) = \dfrac{\sqrt{n} \indic_{\{\boldX/u_n \in A(x, \mathsf{C})\}} }{\sqrt{k p_n(\mathsf{C})}}\,,
\quad \mathsf{C} \in \mathcal{U}(\mathcal{S}_\infty)\,.
\]
We consider a sequence $(\delta_n)_{n \geq 1}$ decreasing to zero. For $n$ large enough the condition $\rho((x,\mathsf{C}), (x',\mathsf{C}')) < \delta_n$ is then satisfied only for $\mathsf{C} = \mathsf{C}'$. This leads to the following relations:
\begin{align*}
	\E\Big[ \big(V_n(x, &\mathsf{C}) - V_n(x', \mathsf{C})\big)^2 \Big]\\
	&= \dfrac{n}{k p_n(\mathsf{C})} \E[ (\indic_{\{\boldX/u_n \in A(x,\mathsf{C})\}} -\indic_{\{\boldX/u_n \in A(x',\mathsf{C})\}})^2]\\
	&= \dfrac{n}{k p_n(\mathsf{C})} \Big[ \P\Big( \boldX/u_n \in A(x,\mathsf{C}) \cap {A(x',\mathsf{C})}^c \Big) + \P\Big( \boldX/u_n \in {A(x,\mathsf{C})}^c \cap A(x',\mathsf{C})\ \Big) \Big]\,,
\end{align*}
where $A(x,\mathsf{C})^c$ denotes the complementary of the set $A(x,\mathsf{C})$. Since the set $A(x,\mathsf{C})$ consists of disjoint unions of $A(x,\beta)$ it can be decomposed as
\[
\P\Big( \boldX/u_n \in A(x,\mathsf{C}) \cap A(x',\mathsf{C})^c \Big)
= \sum_{\beta : C_\beta \subset \mathsf{C}} \P( \boldX/u_n \in A(x,\beta) \cap A(x',\mathsf{C})^c)\,.
\]
If $\beta$ is such that $C_\beta \subset \mathsf{C}'$ then we have the inclusion $A(x',\mathsf{C})^c \subset A(x',\beta)^c$. This gives the inequality
\[	
\P\Big( \boldX/u_n \in A(x,\mathsf{C}) \cap A(x',\mathsf{C})^c \Big)
\leq
\sum_{\beta : C_\beta \subset \mathsf{C}} \P \big( \boldX/u_n \in A(x,\beta) \cap A(x',\beta)^c \big)
=: \sum_{\beta : C_\beta \subset \mathsf{C}} v_n(\beta, x, x')\,.
\]
Similarly we set $w_n(\beta, x, x') = \P \big( \boldX/u_n \in A(x,\beta)^c \cap A(x',\beta) \big)$ and the same arguments applied to $A(x,\mathsf{C})^c \cap A(x',\mathsf{C})$ entail that
\begin{align}
\E\Big[ \big(V_n(x, \mathsf{C}) - V_n(x', \mathsf{C})\big)^2 \Big]
&\leq
\dfrac{n}{k p_n(\mathsf{C})}
\Big(
\sum_{\beta : C_\beta \subset \mathsf{C}} v_n(\beta, x, x') + \sum_{\beta \subset \mathsf{C}} w_n(\beta, x, x')
\Big) \nonumber\\
&\leq
\sum_{\beta : C_\beta \subset \mathsf{C}} \Big( \dfrac{n v_n(\beta, x, x')}{k p_n(\beta)} + \dfrac{n w_n(\beta, x, x')}{k p_n(\beta)}
\Big)\,, \label{eq:proof_decomp_vn_wn}
\end{align}
where we used that $p_n(\beta) \leq p_n(\mathsf{C})$ for any $\beta$ such that $C_\beta \subset \mathsf{C}$.

We deal with the term $v_n(\beta, x,x')$ (the calculations with $w_n(\beta, x,x')$ are similar). Since we assumed the marginals of $\boldX$ to be continuous the event $\{ x'|\boldX| = u_n\}$ has zero probability. Moreover we recall that we project onto the $\ell^1$-ball (see Remark \ref{main-rem:projection_ball} in the main text), therefore if $\pi(x' \boldX / u_n) \in C_\beta$ then $x' |\boldX| > u_n$. Both considerations entail that
\[
v_n(\beta, x, x')
= \P( x |\boldX| > u_n, \pi(x\boldX/u_n) \in C_\beta, \pi(x'\boldX/u_n) \notin C_\beta)\,.
\]
Then we use the equivalence \eqref{eq:equiv_proj_C_beta} which implies that
\begin{align}
v_n(\beta, x, x')
\leq \P \Big( &\max_{i \in \beta} x\boldX_{\beta, \, i} / u_n < 1\,,  \min_{i \in \beta^c} x \boldX_{\beta, \, i} / u_n \geq 1, \max_{i \in \beta} x'\boldX_{\beta, \, i} / u_n \geq 1 \Big) \nonumber\\
&+  \P \Big( \max_{i \in \beta} x\boldX_{\beta, \, i} / u_n < 1\,,  \min_{i \in \beta^c} x \boldX_{\beta, \, i} / u_n \geq 1, \min_{i \in \beta} x'\boldX_{\beta, \, i} / u_n < 1 \Big) \label{eq:proof_v_n_2nd_part}\,.
\end{align}
Note that the first (resp. second) probability is non-null only if  $x<x'$ (resp. $x' < x$).
The first term in \eqref{eq:proof_v_n_2nd_part} multiplied by $\frac{n}{k p_n(\beta)}$ is equivalent to
\begin{align*}
	&\dfrac{\P ( \max_{i \in \beta} x\boldX_{\beta, \, i} / u_n < 1\,,  \min_{i \in \beta^c} x \boldX_{\beta, \, i} / u_n \geq 1, \max_{i \in \beta} x'\boldX_{\beta, \, i} / u_n \geq 1 )}{\P( \pi(\boldX/u_n) \in C_\beta, |\boldX| > u_n)}\\
	&= 	\dfrac{\P( \max_{i \in \beta} x\boldX_{\beta, \, i} / u_n < 1\,,  \min_{i \in \beta^c} x \boldX_{\beta, \, i} / u_n \geq 1, \max_{i \in \beta} x'\boldX_{\beta, \, i} / u_n \geq 1)}{\P( \max_{i \in \beta} \boldX_{\beta, \, i} / u_n < 1\, \min_{i \in \beta^c} \boldX_{\beta, \, i} / u_n \geq 1)}\\
	&\xrightarrow[n \to \infty]{} \dfrac{\mu_\beta(\{\boldy \in \R_+^d : \max_{i \in \beta} x\boldy_{\beta, \, i}(\beta) < 1\,, \min_{i \in \beta^c} x \boldy_{\beta, \, i}(\beta) \geq 1, \max_{i \in \beta} x'\boldy_{\beta, \, i}(\beta) \geq 1\})}{\mu_\beta(\{\boldy \in \R_+^d : \max_{i \in \beta} \boldy_{\beta, \, i}(\beta) < 1,\, \min_{i \in \beta^c} \boldy_{\beta, \, i}(\beta) \geq 1\})}\,,
\end{align*}
where we used the regular variation property of $\boldX$ on $\R^d_+ \setminus \mathbb C_\beta$. By regular variation of the function
\[
x \mapsto \dfrac{\P( \max_{i \in \beta} x\boldX_{\beta, \, i} / u_n < 1\,,  \min_{i \in \beta^c} x \boldX_{\beta, \, i} / u_n \geq 1, \max_{i \in \beta} x'\boldX_{\beta, \, i} / u_n \geq 1)}{\P( \max_{i \in \beta} \boldX_{\beta, \, i} / u_n < 1\, \min_{i \in \beta^c} \boldX_{\beta, \, i} / u_n \geq 1)}
\]
(see the arguments in Section \ref{proof-subsec:assumption1} of the Supplementary Material), the former convergence actually holds uniformly on $[\frac{1}{1+\tau},1+\tau]$, see \cite{bingham_et_al}, Section 1.2. Finally, the homogeneity property of the limit implies that the previous limit is bounded by
\begin{align*}
	&\dfrac{\mu(\{\boldy \in \R^d_+ : 1/x' \leq \max_{i \in \beta} \boldy_{\beta, \, i}(\beta) < 1/x\})}{\mu_\beta(\{\boldy \in \R_+^d : \max_{i \in \beta} \boldy_{\beta, \, i}(\beta) < 1,\, \min_{i \in \beta^c} \boldy_{\beta, \, i}(\beta) \geq 1\})}\\
	&\hspace{1cm} = \dfrac{\mu(\{\boldy \in \R^d_+ : \max_{i \in \beta} \boldy_{\beta, \, i}(\beta) \geq 1/x'\}) - \mu(\{\boldy : \max_{i \in \beta} \boldy_{\beta, \, i}(\beta) \geq 1/x\})}{\mu_\beta(\{\boldy \in \R_+^d : \max_{i \in \beta} \boldy_{\beta, \, i}(\beta) < 1,\, \min_{i \in \beta^c} \boldy_{\beta, \, i}(\beta) \geq 1\})}\\
	&\hspace{1cm} = ({x'}^{\alpha(\beta)} - x^{\alpha(\beta)}) \dfrac{\mu_\beta(\{\boldy \in \R^d_+ : \max_{i \in \beta} \boldy_{\beta,\, i}(\beta) \geq 1\})}{\mu_\beta(\{\boldy \in \R_+^d : \max_{i \in \beta} \boldy_{\beta, \, i}(\beta) < 1,\, \min_{i \in \beta^c} \boldy_{\beta, \, i}(\beta) \geq 1\})}\\
	&\hspace{1cm} =: ({x'}^{\alpha(\beta)} - x^{\alpha(\beta)}) C_1(\beta)\,.
\end{align*}

Similar arguments allow us to deal with the second term in \eqref{eq:proof_v_n_2nd_part}. This quantity multiplied by $\frac{n}{k p_n(\beta)}$ is equivalent to
\begin{align*}
	&\dfrac{\P ( \max_{i \in \beta} x\boldX_{\beta, \, i} / u_n < 1\,,  \min_{i \in \beta^c} x \boldX_{\beta, \, i} / u_n \geq 1, \min_{i \in \beta} x'\boldX_{\beta, \, i} / u_n < 1 )}{\P( \pi(\boldX/u_n) \in C_\beta, |\boldX| > u_n)}\\
	&= \dfrac{\P( \max_{i \in \beta} x\boldX_{\beta, \, i} / u_n < 1\,,  \min_{i \in \beta^c} x \boldX_{\beta, \, i} / u_n \geq 1, \min_{i \in \beta^c} x'\boldX_{\beta, \, i} / u_n < 1)}{\P( \max_{i \in \beta} \boldX_{\beta, \, i} / u_n < 1\, \min_{i \in \beta^c} \boldX_{\beta, \, i} / u_n \geq 1,)}\\
	& = \dfrac{\P( \max_{i \in \beta} x\boldX_{\beta, \, i} / u_n < 1\,,  \min_{i \in \beta^c} x \boldX_{\beta, \, i} / u_n \geq 1, \min_{i \in \beta} x'\boldX_{\beta, \, i} / u_n < 1)}{\P( \max_{i \in \beta} \boldX_{\beta, \, i} / u_n < 1\, \min_{i \in \beta^c} \boldX_{\beta, \, i} / u_n \geq 1)}\,.
\end{align*}
Assumption \ref{main-ass:exponent_measure} combined with \cite{bingham_et_al}, Section 1.2, then ensures the uniform convergence of the former term to
\[
\dfrac{\mu_\beta(\{\boldy \in \R_+^d : \max_{i \in \beta} x\boldy_{\beta, \, i}(\beta) < 1\,, \min_{i \in \beta^c} x \boldy_{\beta, \, i}(\beta) \geq 1, \min_{i \in \beta^c} x'\boldy_{\beta, \, i}(\beta) < 1\})}{\mu_\beta(\{\boldy \in \R_+^d : \max_{i \in \beta} \boldy_{\beta, \, i}(\beta) < 1,\, \min_{i \in \beta^c} \boldy_{\beta, \, i}(\beta) \geq 1\})}\,.
\]
We conclude by using the homogeneity of the measure $\mu_\beta$ which entails the following bound for the limit:
\begin{align*}
	&\dfrac{\mu_\beta(\{\boldy \in \R_+^d : 1/x \leq \min_{i \in \beta^c} \boldy_{\beta, \, i}(\beta) < 1/x'\})}{\mu_\beta(\{\boldy \in \R_+^d : \max_{i \in \beta} \boldy_{\beta, \, i}(\beta) < 1,\, \min_{i \in \beta^c} \boldy_{\beta, \, i}(\beta) \geq 1\})}\\
	&\hspace{1cm} = \dfrac{\mu(\{\boldy \in \R^d_+ : \min_{i \in \beta^c} \boldy_{\beta, \, i}(\beta) \geq 1/x\}) - \mu(\{\boldy : \max_{i \in \beta} \boldy_{\beta, \, i}(\beta) \geq 1/x'\})}{\mu_\beta(\{\boldy \in \R_+^d : \max_{i \in \beta} \boldy_{\beta, \, i}(\beta) < 1,\, \min_{i \in \beta^c} \boldy_{\beta, \, i}(\beta) \geq 1\})}\\
	&\hspace{1cm} = (x^{\alpha(\beta)} - {x'}^{\alpha(\beta)}) \dfrac{\mu_\beta(\{\boldy \in \R^d_+ : \min_{i \in \beta^c} \boldy_{\beta,\, i}(\beta) \geq 1\})}{\mu_\beta(\{\boldy \in \R_+^d : \max_{i \in \beta} \boldy_{\beta, \, i}(\beta) < 1,\, \min_{i \in \beta^c} \boldy_{\beta, \, i}(\beta) \geq 1\})}\\
	&\hspace{1cm}= (x^{\alpha(\beta)} - {x'}^{\alpha(\beta)}) C_2(\beta)\,.
\end{align*}

Similar calculations give the same bound for the term $w_n(x,x')$ in \eqref{eq:proof_decomp_vn_wn}. Going back to Equation \eqref{eq:proof_decomp_vn_wn}, we obtain
\begin{align*}
	\lim_{n \to \infty} \sup_{\rho((x,B),(x',B')) < \delta_n}
	&\E\Big[ V_n(x, \beta) - V_n(x', \beta') \Big]^2\\
	&= \lim_{n \to \infty} \sup_{\rho((x,B),(x',B)) < \delta_n} \sum_{\beta \subset B} \Big( \dfrac{n v_n(\beta, x, x')}{k p_n(\beta)} + \dfrac{n w_n(\beta, x, x')}{k p_n(\beta)} \Big)\\
	&\leq \lim_{n \to \infty} \sup_{\rho((x,B),(x',B)) < \delta_n} \sum_{\beta \subset B}  2 (C_1(\beta) + C_2(\beta)) (x^{\alpha(\beta)} \vee {x'}^{\alpha(\beta)} - x^{\alpha(\beta)} \wedge {x'}^{\alpha(\beta)})\\
	&\leq C \lim_{n \to \infty} \sup_{\rho((x,B),(x',B)) < \delta_n} (x^{\alpha(\beta)} \vee {x'}^{\alpha(\beta)} - x^{\alpha(\beta)} \wedge {x'}^{\alpha(\beta)})\\
	&\leq C \lim_{n \to \infty} \delta_n\\
	&= 0\,,
\end{align*}
where we used in the penultimate line the definition of the semi-metric $\rho$. This concludes the proof of the bracketing condition.


\noindent 2.c. \underline{The entropy condition:} By a standard inequality (see for instance Theorem 2.6.4 in \cite{vandervaart_wellner_96}), it suffices to prove that the Vapnik-Chervonenkis (VC) dimension of the family of sets
\[
\mathcal{A} = \Big\{ A(x, \mathsf{C}) : x \in \Big[\frac{1}{1+\tau}, 1 + \tau\Big], \mathsf{C} \in \mathcal{U}(\mathcal{S}_\infty)\Big\}
\]
is finite. Since $\mathcal{U}(\mathcal{S}_\infty)$ is a finite set it suffices to focus on the family $\big\{ A(x, \mathsf{C}) : x \in \big[\frac{1}{1+\tau}, 1 + \tau\big] \big\}$ for a fixed $\mathsf{C}$. Moreover, by Lemma 2.6.17 in \cite{vandervaart_wellner_96} it suffices to prove that the family $\big\{ A(x, \beta) : x \in \big[\frac{1}{1+\tau}, 1 + \tau\big] \big\}$ for a fixed $\beta \in \mathcal{S}_\infty$ has a finite VC dimension.

In virtue of the equivalence \eqref{eq:equiv_proj_C_beta} the set $A(x,\beta)$ can be rewritten as the intersection of the $|\beta|$ affine hyperplans $\{\boldy \in \R^d_+ : x\sum_{j \in \beta} y_j < x |\beta| y_i + 1\}$ for $i \in \beta$ and the $|\beta^c|$ ones $\{\boldy \in \R^d_+ : x \sum_{j \in \beta} y_j \geq x |\beta| y_i + 1\}$ for $i \in \beta^c$. It is well known that the family of affine hyperplans in $\R^d$ has VC dimension $d+1$. Using again Lemma 2.6.17 in \cite{vandervaart_wellner_96}, we obtain that the family $\big\{ A(x, \beta) : x \in \big[\frac{1}{1+\tau}, 1 + \tau\big] \big\}$ forms a class of sets with finite VC dimension. This concludes the third point of the proof.

\noindent 3. \underline{Conclusion}

We have proved that the convergence
\begin{align}\label{eq:proof_cv_gaussian_process}
	\bigg\{
	\sqrt k (\widetilde{\boldS}_{n,x})^{-1/2} \bigg( \dfrac{ T_{n}(x, \mathsf{C}) }{k \sqrt{p_n(\mathsf{C})}} - \dfrac{n \P(\boldX / u_n \in A(x,\mathsf{C})) }{k \sqrt{p_n(\mathsf{C})}} \bigg)_{\mathsf{C} \in \mathcal{U}(\mathcal{S}_\infty)} \, ; \,
	x \in [&\frac{1}{1+\tau}, 1+\tau]
	\bigg\}\\
	&\overset{d}{\to}
	(N(\mathsf{C}))_{\mathsf{C} \in \mathcal{U}(\mathcal{S}_\infty)}\,, \nonumber
\end{align}
holds in $\ell^\infty([\frac{1}{1+\tau}, 1 +\tau])$, where the limit process is constant in $x$. We identify it to a standard Gaussian random vector in $\R^{2^{s_\infty}-1}$.

Now let $1 \leq s \leq r \leq s_\infty$ and $\beta_1, \ldots, \beta_r$ distinct clusters in $\mathcal{S}_\infty$. Recall that the vector $\boldT_n^{s,r}(x) \in \R^{s+1}$ is defined by
\[
\boldT_n^{s,r}(x)
= \Big( T_n(x, \beta_1), \ldots, T_n(x, \beta_s), \sum_{j=s+1}^r T_n(x,\beta_j) \Big)^\top\,.
\]
Since the $C_\beta$'s are disjoint sets of the simplex, the last component of $\boldT_n^{s,r}(x)$ can be rewritten as
\[
\sum_{j=s+1}^r T_n(x,\beta_j) = T_n(x, \cup_{j=s+1}^r C_{\beta_j})\,.
\]
We use the convergence \eqref{eq:proof_cv_gaussian_process} with $\mathsf{C}_1 = C_{\beta_1}, \ldots, \mathsf{C}_s = C_{\beta_s}, \mathsf{C}_{s+1} = C_{\beta_{s+1}} \cup \cdots \cup C_{\beta_r}$. In this case, the matrix  $\widetilde{\boldS}_{n}(x)$ has the same asymptotic behavior as the diagonal matrix whose diagonal vector is given by
\[
\bigg(x^{\alpha(\beta_1)}, \ldots, x^{\alpha(\beta_s)},
\frac{\sum_{j=s+1}^r p_n(\beta_j) x^{\alpha(\beta_j)} }{ \sum_{j=s+1}^r p_n(\beta_j) }\bigg)^\top\,.
\]
Then the reformulation of the convergence in \eqref{eq:proof_cv_gaussian_process} becomes
\[
	\bigg\{
	\sqrt k (\boldP^{s,r}_{n}(x))^{-1/2} \bigg( \dfrac{ \boldT_{n}^{s,r}(x) }{k} - \E\Big[ \dfrac{ \boldT_{n}^{s,r}(x) }{k}  \Big] \bigg) ; \,
	(1+\tau)^{-1} \leq x \leq 1+\tau
	\bigg\}_{s<r}
	\overset{d}{\to}
	(\boldN^{s,r})_{s<r}\,,
\]
in $\ell^\infty([\frac{1}{1+\tau}, 1 +\tau])$, where the constant limit process is identified to
\[
\boldN^{s,r} = (N(\mathsf{C}_1), \ldots, N(\mathsf{C}_{s+1}))^\top\,,
\]
and where
\[
\boldP^{s,r}_n(x)
=
\Big(
x^{\alpha(\beta_1)} p_n(\beta_1), \ldots, x^{\alpha(\beta_s)} p_n(\beta_s), \sum_{j=s+1}^r x^{\alpha(\beta_j)} p_n(\beta_j)
\Big)^\top \in \R^{s+1}\,.
\]
This concludes the proof of the convergence \eqref{main-eq:CLT_Tnx} of Theorem \ref{main-theo:CLT}.

Finally, in order to prove the convergence in $\ell^\infty([\frac{1}{1+\tau}, 1+\tau])$ of the process
\begin{equation}\label{eq:proof-CLT_bias}
	\bigg\{
	\sqrt k \Diag(\boldP^{s,r}_n(x))^{-1/2} \bigg( \dfrac{ \boldT_{n}^{s,r}(x) }{k} -\boldP^{s,r}_n(x) \bigg)\, ; \,
	(1+\tau)^{-1} \leq x \leq 1+\tau
	\bigg\}_{s<r}
	\overset{d}{\to}
	(\boldN^{s,r})_{s<r}\,,
\end{equation}
it suffices to prove that
\[
	\sup_{x \in [\frac{1}{1+\tau}, 1+\tau]}
	\sqrt k (\boldP^{s,r}_{n}(x))^{-1/2}
	\Big|\E\Big[ \dfrac{ \boldT_{n}^{s,r}(x) }{k} \Big]
	- \boldP^{s,r}_n(x) \Big|
	\to 0\,.
\]
The last convergence holds true if for any $\beta \in \mathcal{S}_\infty$ we have
\[
\sup_{x \in [\frac{1}{1+\tau}, 1+\tau]}
\sqrt{\dfrac{k}{p_n(\beta) x^{\alpha(\beta)}}}
\bigg| \dfrac{n \P(\boldX / u_n \in A(x,\beta))}{k}
- x^{\alpha(\beta)} p_n(\beta) \bigg|
\to 0\,.
\]
This is clearly implied by the bias assumption \eqref{main-eq:bias}.
\subsection{Proof of Proposition \ref{main-prop:CLT_X(k)}}
Recall that we work with a level $(k_n) \in K$ so that the identity $\mathcal {S}_\infty = \widehat{\mathcal{S}}_n$ holds true almost surely for $n$ large enough. Under this condition we get
\[
\sum_{\beta \in \mathcal{S}_\infty} T_{n}(x, \beta) = \sum_{j=1}^n \indic\{ x|\boldX_j| > u_n\}\,.
\]
Then the convergence \eqref{eq:proof_cv_gaussian_process} with $\mathsf{C} = \cup_{\beta \in \mathcal{S}_\infty} C_\beta$ gives
\begin{align*}
	\bigg\{
	\sqrt{k} \dfrac{ \frac{\sum_{j=1}^n \indic\{ x|\boldX_j| > u_n\}}{k} - \sum_{\beta \in \mathcal{S}_\infty} x^{\alpha(\beta)} p_n(\beta)}{\sqrt{\sum_{\beta \in \mathcal{S}_\infty} x^{\alpha(\beta)} p_n(\beta)}}
	\, ; \,
	(1+\tau)^{-1} \leq x \leq 1+\tau
	\bigg\}
	\overset{d}{\to}
	\boldN^{0, s_\infty}\,,
\end{align*}
in $\ell^\infty([\frac{1}{1+\tau}, 1+\tau])$, with $\boldN^{0, s_\infty}$ a standard Gaussian random variable. Since $p^*(\beta)$ is null for $\beta \notin \mathcal{S}^*(\boldZ)$ and $\alpha(\beta) = \alpha$ for $\beta \in \mathcal{S}^*(\boldZ)$ we obtain that
\[
\sum_{\beta \in \mathcal{S}_\infty} x^{\alpha(\beta)} p^*(\beta) =  x^\alpha \sum_{\beta \in \mathcal{S}^*(\boldZ)} p^*(\beta) = x^\alpha\,.
\]
After multiplying both sides of the previous convergence by $x^{\alpha/2}$ and using Slutsky's lemma we get
\[
\sqrt{k} \bigg(\frac{\sum_{j=1}^n \indic\{ x|\boldX_j| > u_n\}}{k} - \sum_{\beta \in \mathcal{S}_\infty} x^{\alpha(\beta)} p_n(\beta) \bigg)
\overset{d}{\to}
x^{\alpha/2} \boldN^{0, s_\infty}\,.
\]
The bias assumption \eqref{main-eq:bias_assumption} allows one to replace the centering term by $\sum_{\beta \in \mathcal{S}_\infty} x^{\alpha(\beta)} p^*(\beta)= x^\alpha$. Applying Vervaat's Lemma (see for instance Lemma A.0.2 in \cite{dehaan_ferreira}) to the nondecreasing function $f_n(x) = k^{-1} \sum_{j=1}^n \indic\{ x|\boldX_j| > u_n\}$ we obtain
\[
\sqrt{k} \bigg( \dfrac{u_n}{|\boldX_{(\lceil x k \rceil + 1)}|}  - x^{1/\alpha} \bigg)
\stackrel{d}{\to} 
-\dfrac{1}{\alpha} x^{1/\alpha - 1} x^{1/2} \boldN^{0,s_\infty}\,,
\quad n \to \infty\,,
\]
where we used that $f_n^\leftarrow(x) = |\boldX|_{(\lceil x k \rceil + 1)} / u_n$. We fix $x=1$ in the previous convergence and use the Delta-method with $g(t) = t^{\alpha(\beta)}$ for a fixed $\beta \in \mathcal{S}_\infty$ which gives the convergence
\[
\sqrt{k} \bigg\{ \bigg(\dfrac{u_n}{|\boldX_{(k + 1)}|}\bigg)^{\alpha(\beta)}  - 1 \bigg\}
\stackrel{d}{\to}
- \dfrac{\alpha(\beta)}{\alpha}\boldN^{0,s_\infty}\,,
\quad n \to \infty\,.
\]
The relation $\alpha(\beta) = \alpha$ for any $\beta \in  \mathcal{S}^*(\boldZ)$ then yields
\begin{equation}\label{eq:cv_order_stat_distinction}
	\sqrt k \bigg\{ \bigg( \dfrac{u_n}{|\boldX_{(k+1)}|} \bigg)^{\alpha(\beta)} - 1 \bigg\} \sqrt{p_n(\beta)}
	\stackrel{d}{\to} 
	\left\{
	\begin{array}{ll}
		- \sqrt{p^*(\beta)} \boldN^{0,s_\infty}\,,& \mbox{for } \beta \in \mathcal{S}(\boldZ)\,,\\
		0\,, & \mbox{for } \beta \in \mathcal{S}_\infty \setminus \mathcal{S}(\boldZ)\,.
	\end{array}
	\right.
\end{equation}

Now we start from the decomposition
\begin{align}
	&\sqrt k (\boldP^{s,r}_{n}(1))^{-1/2} \bigg( \dfrac{ \boldT_{n}^{s,r}(u_n/|\boldX_{(k+1)}|) }{k} - \boldP_n^{s,r}(1) \bigg) \label{eq:decomposition_limit_X_k}\\
	&=
	(\boldP^{s,r}_{n}(1))^{-1/2} (\boldP^{s,r}_{n}(u_n/|\boldX_{(k+1)}|))^{1/2}
	\bigg\{
	\sqrt k 
	(\boldP^{s,r}_{n}(x))^{-1/2} \bigg( \dfrac{ \boldT_{n}^{s,r}(x) }{k} - \boldP_n^{s,r}(x) \bigg)
	\bigg\} \bigg|_{x = u_n / |\boldX_{(k+1)}|} \nonumber\\
	&\hspace{1cm}+ \sqrt k (\boldP_n^{s,r}(1))^{-1/2} \Big( \boldP_n^{s,r}(u_n/|\boldX_{(k+1)}|) - \boldP^{s,r}_n(1) \Big)\,. \nonumber
\end{align}
Since $u_n/|\boldX_{(k+1)}| \to 1$ in probability, the uniform convergence in \eqref{eq:proof-CLT_bias} combined with Slutsky's lemma ensures that the first term converges in distribution to $\boldN^{s,r}$. Regarding the second term Equation \eqref{eq:cv_order_stat_distinction} ensures that it converges to $- \sqrt{\boldP^{s,r}} \boldN^{0, s_\infty}$ where we recall that
\[
\boldP^{s,r}
= \Big(p^*(\beta_1), \ldots, p^*(\beta_s), \sum_{j=s+1}^r p^*(\beta_j)\Big)^\top\,.
\]
The covariance matrix of $\boldN^{s,r} - \sqrt{\boldP^{s,r}} \boldN^{0, s_\infty}$ can be decomposed as follows:
\begin{align}
\Sigma(\boldN^{s,r} &- \sqrt{\boldP^{s,r}} \boldN^{0, s_\infty}) \label{eq:decomposition_cov_mat}\\
&= \Sigma(\boldN^{s,r})
- \Sigma(\boldN^{s,r}, \sqrt{\boldP^{s,r}} \boldN^{0, s_\infty})
- \Sigma(\sqrt{\boldP^{s,r}} \boldN^{0, s_\infty}, \boldN^{s,r})
+ \Sigma(\sqrt{\boldP^{s,r}} \boldN^{0, s_\infty}) \nonumber\\
&= Id_{s+1}
- \Sigma(\boldN^{s,r}, \sqrt{\boldP^{s,r}} \boldN^{0, s_\infty})
- \Sigma(\sqrt{\boldP^{s,r}} \boldN^{0, s_\infty}, \boldN^{s,r})
+ \sqrt{\boldP^{s,r}} \sqrt{\boldP^{s,r}}^\top\,, \nonumber
\end{align}
where we used the fact that $\boldN^{s,r}$ (resp. $\boldN^{0, s_\infty}$) is a centered Gaussian random vector (resp. variable). Here we have denoted by $\Sigma(\boldX, \boldY)$ the matrix $\E[\boldX \cdot \boldY^\top]$ for any centered random vectors $\boldX$ and $\boldY$. Regarding the middle terms, we start from the covariance matrix defined in \eqref{eq:covariance_matrix_main_theorem} with $x=1$ and restricted to the sets
\[
\mathsf{C}_1 = C_{\beta_1}, \ldots, \mathsf{C}_s = C_{\beta_s}, \mathsf{C}_{s+1} = C_{\beta_{s+1}} \cup \cdots \cup C_{\beta_r}, \mathsf{C}_{s+2} = \cup_{\beta \in \mathcal{S}_\infty} C_\beta\,.
\]
This gives the following covariance matrix in $\R^{s+2} \times \R^{s+2}$:
\[
\begin{pmatrix}
	1 & b_{1,2}^n & \cdots & b_{1, s+1}^n & \sqrt \frac{p_n(\beta_1)}{\sum_{\beta \in \mathcal{S_\infty}} p_n(\beta)}\\
	\vdots & \ddots &  & & \vdots\\
	b_{s+1,1}^n & \cdots & b_{s, s+1}^n & 1 & \sqrt \frac{\sum_{j=s+1}^r p_n(\beta_j)}{\sum_{\beta \in \mathcal{S_\infty}} p_n(\beta)}\\
	 \sqrt \frac{p_n(\beta_1)}{\sum_{\beta \in \mathcal{S_\infty}} p_n(\beta)} & \cdots & \cdots & \sqrt \frac{\sum_{j=s+1}^r p_n(\beta_j)}{\sum_{\beta \in \mathcal{S_\infty}} p_n(\beta)} & 1\\
\end{pmatrix}\,,
\]
with
\[
b_{i,j}^n = -\dfrac{k}{n} \sqrt{p_n(\mathsf{C}_i) p_n(\mathsf{C}_j)}\,.
\]
It converges to the covariance matrix of the vector $({\boldN^{s,r}}^\top, \boldN^{0, s_\infty})^\top$:
\[
\Sigma(({\boldN^{s,r}}^\top, \boldN^{0, s_\infty})^\top)
=
\begin{pmatrix}
	1 & 0 & \cdots & 0 & \sqrt{p^*(\beta_1)}\\
	\vdots & \ddots & $ $ & \vdots & \vdots\\
	0 & \cdots & 0 & 1 & \sqrt{\sum_{j=s+1}^r p^*(\beta_j)}\\
	\sqrt{p^*(\beta_1)} & \cdots & \cdots & \sqrt {\sum_{j=s+1}^r p^*(\beta_j)} & 1\\
\end{pmatrix}\,.
\]
We deduce that
\[
\Sigma(\boldN^{s,r}, \sqrt {\boldP^{s,r}} \boldN^{0, s_\infty})
= \Sigma(\boldN^{s,r},\boldN^{0, s_\infty}) \sqrt {\boldP^{s,r}}^\top
= \sqrt {\boldP^{s,r}} \sqrt {\boldP^{s,r}}^\top\,.
\]
Moving back to \eqref{eq:decomposition_cov_mat}, we conclude that the covariance matrix of $\boldN^{s,r} - \sqrt{\boldP^{s,r}} \boldN^{0, s_\infty}$ is equal to $Id_{s+1} - \sqrt {\boldP^{s,r}} \sqrt {\boldP^{s,r}}^\top$.

All in all the decomposition in \eqref{eq:decomposition_limit_X_k} leads to the convergence
\begin{align*}
	\sqrt k &(\boldSigma_n(1))^{-1/2} \bigg( \dfrac{ \boldT_{n}^{s,r}(u_n/|\boldX_{(k+1)}|) }{k} - \boldP_n^{s,r}(1) \bigg)
	\to
	(Id_{s+1} - \sqrt {\boldP^{s,r}} \sqrt {\boldP^{s,r}}^\top) \boldN\,, \quad n \to \infty\,,
\end{align*}
with $\boldN$ a standard centered Gaussian vector.

\subsection{Proof of Remark \ref{main-rem:bias_assumption}}
Section \ref{proof-subsec:assumption1} ensures that the function $t \mapsto \P(\boldX/t \in A(1,\beta))$ is regularly varying with index $\alpha(\beta)$. It follows that $\P(\boldX/u_n \in A(1,\beta)) = u_n^{-\alpha(\beta)} l(u_n)$, where $l$ is a slowly varying function, ie $l(tx)/l(t) \to 1$ as $t \to \infty$, for all $x > 0$. Combining this relation with the asymptotic expansion $k \sim n \P(|\boldX| > u_n) \sim n u_n^{-\alpha}$, we get 
\[
p_n(\beta)
= \dfrac{n \P(\boldX/u_n \in A(1,\beta))}{n \P(|\boldX| > u_n)}
\sim \dfrac{n u_n^{-\alpha(\beta)} l(u_n)}{k}
\sim \dfrac{n}{k} \Big(\dfrac{k}{n}\Big)^{\alpha(\beta)/\alpha} l(u_n)
\]
This leads to the following equivalences, as $n \to \infty$,
\[
\sqrt k p_n(\beta) \to 0
\iff \sqrt k \dfrac{k^{\frac{\alpha(\beta)-\alpha}{\alpha}}}{n^{\frac{\alpha(\beta)-\alpha}{\alpha}}} l(u_n) \to 0
\iff \dfrac{k^{\frac{2\alpha(\beta)-\alpha}{2\alpha}}}{n^{\frac{\alpha(\beta)-\alpha}{\alpha}}} l(u_n) \to 0\,.
\]
After raising the latter expression to the power $2 \alpha / (2\alpha(\beta) - \alpha)$, we get
\[
\sqrt k p_n(\beta) \to 0
\iff \dfrac{k}{n^{\frac{2(\alpha(\beta)-\alpha)}{2\alpha(\beta) - \alpha}}} l(u_n)^{\frac{2 \alpha}{2 \alpha(\beta) - \alpha}} \to 0\,.
\]
Since $l$ is slowly varuing we have $(u_n)^{-\epsilon}l(u_n) \to 0$ for any $\epsilon > 0$. Therefore, if $\kappa>\frac{2(\alpha(\beta) - \alpha)}{2 \alpha(\beta) - \alpha}$ then the assumption $k = o(n^{\kappa})$ implies that $\sqrt k p_n(\beta) \to 0$.

\subsection{Proof of Theorem \ref{main-theo:cv_KL_chi_square}} \label{app:subsec:bias_selection}

In order to prove the convergence
\[
\E[KL(\boldP_k \| \boldM_k )|_{\boldp = \hat \boldp_k}]
- \E \big[ \log L_{\boldP_k} (\boldT_{n,k}) \big]
+ \E[\log L_{\boldM_k} (\hat \boldp ; \boldT_{n,k})]
\to s\,,
\]
we prove that
\[
\E[KL(\boldP_k(x) \| \boldM_k )|_{\boldp = \hat \boldp_k(x)}]
- \E \big[ \log L_{\boldP_k(x)} (\boldT_{n}(x)) \big]
+ \E[\log L_{\boldM_k} (\hat \boldp(x) ; \boldT_{n}(x))]
= \E[\psi_n(x)]\,,
\]
where $\boldT_n(x)$ denotes the vector of $\R^{2^d-1}$ with ordered components
\[
T_{n,i}(x) = \sum_{j=1}^n \indic_{\{\boldX/u_n \in A(x,\beta_i)\}}\,,
\]
$\boldP_k(x)$ denotes its distribution, $\hat \boldp(x)$ the maximum likelihood estimator of the model $\boldM_k$ for the vector $\boldT_n(x)$, and $\psi_n(x)$ satisfies
\[
\psi_n(x) \stackrel{d}{\to} \chi^2(s)\,, \quad n \to \infty\,.
\]
uniformly over $x \in [\frac{1}{1+\tau}, 1+\tau]$.

We recall the expression of the log-likelihood $\log L_{\boldM_k}$ as a function of $\boldp := (p_1,\ldots,p_s)^\top \in \B^s_+(0,1)$:
\begin{align*}
\log L_{\boldM_k} (\boldp ; \boldT_n(x))
= \log(k!) - \sum_{i=1}^{2^d-1} \log(T_{n, i}(x)!)
&+ \sum_{i=1}^{s} T_{n, i}(x) \log( p_i)\\
&+ \Big(\sum_{i=s + 1}^{r} T_{n, i}(x)\Big) \log \Big(\frac{1-\sum_{j=1}^s p_j}{r-s} \Big)\,.
\end{align*}
The expectation of $T_{n,j}(x)$ is given by $\E[T_{n,j}(x)] = p_{n,j}(x) a_n(x)$, where
\[
p_{n,j}(x) = \P(\pi(x \boldX/u_n) \in C_{\beta_j} \mid x |\boldX| > u_n)\,,
\]
and
\[
a_n(x) = n \P(x|\boldX| > u_n) =n  \P(|\boldX| > u_n) \dfrac{\P(x|\boldX| > u_n)}{\P(|\boldX| > u_n)} \sim k x^\alpha\,, \quad n \to \infty\,.
\]
This gives the following expression for the expectation of the log-likelihood:
\begin{equation}\label{eq:log_likelihood_k_esp}
	\begin{aligned}
		\E\big[\log L_{\boldM_k} (\boldp ; \boldT_n(x))\big]
		=
		\log(k!) - \E\bigg[&\sum_{i=1}^{2^d-1} \log(T_{n, i}(x)!)\bigg]
		+  a_n(x) \sum_{i=1}^{s} p_{n, i}(x) \log( p_i)\\
		&+ a_n(x) \Big(\sum_{i=s + 1}^{r} p_{n, i}(x)\Big) \log\Big(\frac{1-\sum_{j=1}^s p_j}{r-s}\Big)\,.
	\end{aligned}
\end{equation}

A similar computation as for the maximum likelihood estimator entails that this expectation is maximized for
\[
\boldp
= \tilde \boldp(x)
:= \Big( \frac{p_{n, 1}(x)}{\sum_{j=1}^{r} p_{n, j}(x)}, \ldots, \frac{p_{n,s}(x)}{\sum_{j=1}^{r} p_{n, j}(x)}\Big)^\top \in \B^s_+(0,1)\,.
\]
Note that since $n$ is large enough so that the event ${\cal S}_\infty = \widehat{\cal S}_n$ holds true almost surely, we have $r = \hat s_n \geq s^*$ and thus $\sum_{j=1}^{\hat s_n} p_{n, j} \geq \sum_{j=1}^{s^*} p_{n, j} \to 1$. This means that asymptotically we have $\tilde p_{j}(x) \sim p_{n,j}(x)$.

The first step of the bias selection consists in establishing a Taylor expansion for the estimator
\[
KL (\boldP_k(x) \| \boldM_k)|_{\boldp = \hat \boldp(x)}
= \E \big[ \log L_{\boldP_k(x)}(\boldT_n(x)) \big]
- \E \big[ \log L_{\boldM_k} (\boldp ; \boldT_n(x)) \big]|_{\boldp = \hat \boldp(x)}\,.
\]

\begin{lem}\label{lem:asymptotic_expansion_hat_p}
	There exists $c_{n,1}(x) \in (0,1)$ such that
	\begin{align}\label{eq:kullback_leibler_approximation_expectation_hat_p}
	KL (\boldP_k&(x) \| \boldM_k )|_{\boldp =\hat \boldp(x)}
	= KL (\boldP_k(x) \| \boldM_k )|_{\boldp = \tilde \boldp(x)}\\
	&+\frac{1}{2} (\hat \boldp(x) - \tilde \boldp(x))^\top
	\frac{\partial^2}{\partial \boldp^2} \E \big[ - \log L_{\boldM_k} (\boldp, \boldT_n(x)) \big]\big|_{c_{n,1}(x) \hat \boldp(x) + (1-c_{n,1}(x)) \tilde \boldp(x)} (\hat \boldp(x) - \tilde \boldp(x))\,. \nonumber
	\end{align}
\end{lem}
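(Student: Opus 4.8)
The plan is to reduce the multivariate statement to the one-dimensional Taylor theorem with Lagrange remainder, taken along the segment joining $\tilde \boldp$ and $\hat \boldp$. Write $f(\boldp) := KL(\boldP_k \| \boldM_k)|_{\boldp}$. By \eqref{eq:kullback_leibler_model_k} this equals $\E[\log L_{\boldP_k}(\boldT_n)] - \E[\log L_{\boldM_k}(\boldp;\boldT_n)]$, and the first term does not depend on $\boldp$; hence $f$ is a $\boldp$-independent constant plus $\E[-\log L_{\boldM_k}(\boldp;\boldT_n)]$, whose closed form is given in \eqref{eq:log_likelihood_k_esp}. Since the two functions differ by a constant, they share the same gradient and the same Hessian $\frac{\partial^2}{\partial\boldp^2}\E[-\log L_{\boldM_k}(\boldp;\boldT_n)]$, so it suffices to work with this explicit function of $\boldp$ and to transfer the Hessian at the end.

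First I would record that the only $\boldp$-dependence of $\E[-\log L_{\boldM_k}(\boldp;\boldT_n)]$ enters through the smooth terms $\log(p_i)$ and $\log\big((1-\sum_{j=1}^s p_j)/(r-s)\big)$, so that $f$ is $C^\infty$ on the open convex set $\{\boldp \in \R^s : p_i > 0,\ \sum_{i=1}^s p_i < 1\}$. Both $\tilde \boldp$ and $\hat \boldp$ lie in this set: their entries are strictly positive, and because there is positive mass on the indices $s+1,\ldots,r$ their coordinates sum to strictly less than one. By convexity the whole segment $\{c\hat \boldp + (1-c)\tilde \boldp : c \in [0,1]\}$ then stays inside the set, so $f$ is twice continuously differentiable along it. Note also that no exchange of differentiation and expectation is needed here, since the expectation over $\boldT_n$ has already been carried out in \eqref{eq:log_likelihood_k_esp} and only affects the constant coefficients $p_{n,i}$.

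The crucial point is that the linear term vanishes. The discussion preceding the lemma shows that $\tilde \boldp$ maximizes $\boldp \mapsto \E[\log L_{\boldM_k}(\boldp;\boldT_n)]$ over the same constraint set, equivalently that it minimizes $f$; since $\tilde \boldp$ is an interior point of that set, the first-order condition gives $\nabla f(\tilde \boldp) = 0$. This can be checked directly by differentiating the closed form in \eqref{eq:log_likelihood_k_esp}, exactly as in the maximum-likelihood computation of Section \ref{subsec:aic_model_k}.

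It then remains to apply Taylor's theorem. Setting $\phi(t) := f(\tilde \boldp + t(\hat \boldp - \tilde \boldp))$ for $t \in [0,1]$, the function $\phi$ is twice continuously differentiable by the domain argument above, and the one-dimensional Taylor formula with Lagrange remainder gives $\phi(1) = \phi(0) + \phi'(0) + \frac{1}{2}\phi''(c_1)$ for some $c_1 \in (0,1)$. By the chain rule $\phi'(0) = \nabla f(\tilde \boldp)^\top(\hat \boldp - \tilde \boldp) = 0$ and $\phi''(c_1) = (\hat \boldp - \tilde \boldp)^\top \frac{\partial^2}{\partial\boldp^2} f|_{c_1\hat \boldp + (1-c_1)\tilde \boldp}(\hat \boldp - \tilde \boldp)$; replacing the Hessian of $f$ by the identical Hessian of $\E[-\log L_{\boldM_k}]$ yields precisely \eqref{eq:kullback_leibler_approximation_expectation_hat_p}. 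The only step requiring genuine care is the domain argument of the second paragraph: one must guarantee that the segment never reaches the boundary where the logarithms blow up, which is exactly where the strict positivity of the entries and the convexity of the constraint set are used.
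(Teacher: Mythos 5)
Your proof is correct and takes essentially the same route as the paper: the paper's argument is precisely the standard derivation of the one-dimensional Taylor--Lagrange formula along the segment $t \mapsto t\hat \boldp + (1-t)\tilde \boldp$ (carried out from scratch via Cauchy's mean value theorem with $g(t)=(t-1)^2$), using exactly your two key facts that the first term of \eqref{eq:kullback_leibler_model_k} is constant in $\boldp$ and that the gradient vanishes at $\tilde \boldp$ by its optimality. The only difference is cosmetic---you invoke Taylor's theorem directly where the paper re-proves it---and your explicit check that the segment stays in the open set $\{p_i > 0,\ \sum_i p_i < 1\}$ where the logarithms are finite is a point the paper leaves implicit.
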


Since the quantity $\tilde \boldp(x)$ is deterministic, the first term of the right-hand side in \eqref{eq:kullback_leibler_approximation_expectation_hat_p} can be written as
\begin{equation}\label{eq:Taylor_KL_tilde_p}
KL (\boldP_k(x) \| \boldM_k )|_{\boldp = \tilde \boldp(x)}
= \E \big[ \log L_{\boldP_k(x)}(\boldT_n(x)) \big]
- \E \big[ \log L_{\boldM_k} (\tilde \boldp(x) ; \boldT_n(x)) \big] \, . 
\end{equation}
The idea is then to provide a Taylor expansion of $\log L_{\boldM_k} (\tilde \boldp(x) ; \boldT_n(x))$ around the vector $\hat \boldp(x)$. This is the purpose of the following lemma.

\begin{lem}\label{lem:asymptotic_expansion_p*}
	There exists $c_{n,2}(x) \in (0,1)$ such that
	\begin{align}\label{eq:kullback_leibler_approximation_hat_p}
	\log &L_{\boldM_k} (\tilde \boldp(x) ; \boldT_n(x))
	= \log L_{\boldM_k} (\hat \boldp(x) ; \boldT_n(x))\\
	&+ \frac{1}{2} (\tilde \boldp(x) - \hat \boldp(x))^\top
	\frac{\partial^2}{\partial \boldp^2} \log L_{\boldM_k} (c_{n,2}(x) \tilde \boldp(x) + (1-c_{n,2}(x)) \hat \boldp(x) ; \boldT_n(x)) (\tilde \boldp(x) - \hat \boldp(x))\,. \nonumber
	\end{align}
\end{lem}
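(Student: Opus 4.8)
The plan is to read the identity off as a second-order Taylor expansion of the map $\boldp \mapsto \log L_{\boldM_k}(\boldp ; \boldT_n)$ with the Lagrange (mean-value) form of the remainder, taken around the maximum likelihood estimator $\hat \boldp$. Concretely, I would introduce the auxiliary one-variable function $g(t) := \log L_{\boldM_k}(\hat \boldp + t(\tilde \boldp - \hat \boldp) ; \boldT_n)$ for $t \in [0,1]$, so that $g(0) = \log L_{\boldM_k}(\hat \boldp ; \boldT_n)$ and $g(1) = \log L_{\boldM_k}(\tilde \boldp ; \boldT_n)$. The classical univariate Taylor formula gives $g(1) = g(0) + g'(0) + \tfrac12 g''(c_2)$ for some $c_2 \in (0,1)$, and the chain rule identifies $g''(c_2) = (\tilde \boldp - \hat \boldp)^\top \frac{\partial^2}{\partial \boldp^2} \log L_{\boldM_k}(c_2 \tilde \boldp + (1-c_2)\hat \boldp ; \boldT_n)(\tilde \boldp - \hat \boldp)$, which is precisely the remainder in \eqref{eq:kullback_leibler_approximation_hat_p}. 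The whole statement therefore reduces to showing that the linear term $g'(0) = \nabla_{\boldp} \log L_{\boldM_k}(\hat \boldp ; \boldT_n)^\top (\tilde \boldp - \hat \boldp)$ vanishes.

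Two preliminary checks are needed before invoking Taylor's theorem. First, the log-likelihood must be twice continuously differentiable along the whole segment joining $\hat \boldp$ to $\tilde \boldp$. Since it only involves the logarithms $\log p_i$ and $\log(1 - \sum_{j=1}^s p_j)$, it is smooth on the open convex set $\Delta := \{\boldp \in \R^s : p_i > 0 \text{ for all } i,\ \sum_{j=1}^s p_j < 1\}$. I would then verify that both endpoints lie in $\Delta$: we have $\hat p_i = T_{n,i}/k > 0$, and $\tilde p_i = p_{n,i}/\sum_{j=1}^r p_{n,j} > 0$ as well, because under the event ${\cal S}^\infty_k = \widehat{\cal S}_n$ the first $r = \hat s_n$ ordered counts are strictly positive and the associated $p_{n,j}$ are positive too; moreover, since $s < r$ there remain clusters carrying mass beyond index $s$, so $\sum_{j=1}^s \hat p_j < 1$ and $\sum_{j=1}^s \tilde p_j < 1$. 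Convexity of $\Delta$ guarantees the entire segment stays inside it, so $g$ is $C^2$ on $[0,1]$ and the expansion is legitimate.

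The crux is the vanishing of $g'(0)$, which is exactly where choosing $\hat \boldp$ as expansion point pays off: $\hat \boldp$ is an interior critical point of the log-likelihood. Differentiating the explicit expression gives, for $1 \le i \le s$,
\[
\frac{\partial}{\partial p_i} \log L_{\boldM_k}(\boldp ; \boldT_n) = \frac{T_{n,i}}{p_i} - \frac{\sum_{j=s+1}^r T_{n,j}}{1 - \sum_{j=1}^s p_j}\, .
\]
Evaluating at $\hat \boldp$ and using $\hat p_i = T_{n,i}/k$ together with the fact that the observed counts sum to $k$ and vanish beyond index $r$ (so $\sum_{j=s+1}^r T_{n,j} = k - \sum_{j=1}^s T_{n,j}$ and $1 - \sum_{j=1}^s \hat p_j = k^{-1}(k - \sum_{j=1}^s T_{n,j})$) yields $\partial_{p_i} \log L_{\boldM_k}(\hat \boldp ; \boldT_n) = k - k = 0$ for every $i$. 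Hence $\nabla_{\boldp} \log L_{\boldM_k}(\hat \boldp ; \boldT_n) = 0$, the term $g'(0)$ disappears, and the Taylor formula collapses to \eqref{eq:kullback_leibler_approximation_hat_p}.

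I expect the main obstacle to be purely a matter of ruling out boundary degeneracies, namely confirming that $\hat \boldp$ and $\tilde \boldp$ are genuine interior points of $\Delta$ (positivity of the relevant $T_{n,i}$ and $p_{n,i}$, and the strict inequality $s < r$), rather than any analytic difficulty. Once interiority is secured, the smoothness of the logarithmic log-likelihood and the interior-critical-point property of the maximum likelihood estimator make the remainder a direct application of Taylor's theorem with the Lagrange form.
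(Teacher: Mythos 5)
Your proposal is correct and follows essentially the same route as the paper: a second-order expansion of $\boldp \mapsto \log L_{\boldM_k}(\boldp ; \boldT_n)$ along the segment from $\hat \boldp$ to $\tilde \boldp$, with the linear term killed by the first-order condition at the interior maximum likelihood estimator $\hat \boldp$. The only cosmetic difference is that the paper re-derives the Lagrange-form remainder by applying Cauchy's mean-value theorem to an auxiliary function $h(\boldp) = \log L_{\boldM_k}(\boldp ; \boldT_n) + \frac{\partial}{\partial \boldp} \log L_{\boldM_k}(\boldp ; \boldT_n)(\tilde \boldp - \boldp)$ paired with $g(t) = (t-1)^2$, whereas you invoke the univariate Taylor theorem directly; your explicit verification that both endpoints lie in the open simplex-interior domain is a detail the paper leaves implicit.
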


Lemmas \ref{lem:asymptotic_expansion_hat_p} and \ref{lem:asymptotic_expansion_p*} are a consequence of the following result known as ``Cauchy's Mean-Value Theorem" (see \cite{hille_1964} for a proof).

\begin{lem}\label{lem:univariate_mean_value_theorem}
	Let $f$ and $g$ be two continuous functions on the closed interval $[a,b]$, $a < b$, and differentiable on the open interval $(a,b)$. Then there exists some $c \in (a,b)$ such that
	\[
	( f(b) - f(a) ) g'(c) = ( g(b) - g(a) ) f'(c) \, .
	\]
\end{lem}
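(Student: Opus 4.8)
The plan is to reduce the statement to Rolle's theorem by constructing an auxiliary function that combines $f$ and $g$ in such a way that its endpoint values cancel. The whole difficulty is concentrated in finding the right linear combination; once it is in place the argument is purely mechanical.

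First I would define
\[
h(x) = \big( f(b) - f(a) \big) g(x) - \big( g(b) - g(a) \big) f(x)\, , \qquad x \in [a,b]\, .
\]
Being a fixed linear combination of $f$ and $g$, the function $h$ inherits their regularity: it is continuous on the closed interval $[a,b]$ and differentiable on the open interval $(a,b)$. Next I would compute the boundary values. A direct expansion gives
\[
h(a) = f(b) g(a) - f(a) g(b) = h(b)\, ,
\]
so that $h$ takes the same value at both endpoints.

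I would then invoke Rolle's theorem: since $h$ is continuous on $[a,b]$, differentiable on $(a,b)$, and satisfies $h(a) = h(b)$, there exists some $c \in (a,b)$ at which $h'(c) = 0$. Differentiating the definition of $h$ and evaluating at this point yields
\[
h'(c) = \big( f(b) - f(a) \big) g'(c) - \big( g(b) - g(a) \big) f'(c) = 0\, ,
\]
which rearranges to the claimed identity $\big( f(b) - f(a) \big) g'(c) = \big( g(b) - g(a) \big) f'(c)$.

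The only genuinely nonelementary ingredient here is Rolle's theorem itself, which in turn rests on the extreme value theorem together with Fermat's interior-extremum condition; everything else is bookkeeping. Accordingly, I do not expect a real obstacle in this proof. The single point requiring care is the choice of the auxiliary function $h$: it must be built so that the endpoint terms cancel, which is exactly why the coefficients $f(b)-f(a)$ and $g(b)-g(a)$ are attached to $g$ and $f$ respectively rather than the other way around.
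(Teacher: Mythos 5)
Your proof is correct and complete: the auxiliary function $h(x) = (f(b)-f(a))g(x) - (g(b)-g(a))f(x)$ satisfies $h(a) = f(b)g(a) - f(a)g(b) = h(b)$, so Rolle's theorem gives the desired point $c$. The paper itself does not prove this lemma (it is Cauchy's Mean-Value Theorem, cited to Hille, 1964), and your argument is exactly the standard proof found in such references, so there is nothing to add.
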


\begin{proof}[Proof of Lemma \ref{lem:asymptotic_expansion_hat_p}]
	Let $f$ be the function defined as $f(t) = h(t \hat \boldp(x) + (1-t) \tilde \boldp(x))$ for $t \in [0,1]$, where $h$ is defined as
	\[
	h(\boldp) =
	KL (\boldP_k(x) \| \boldM_k)
	+\frac{\partial}{\partial \boldp} KL (\boldP_k(x) \| \boldM_k ) (\hat \boldp(x) - \boldp(x)) \, .
	\]
	Some short calculations give the following relations:
	\begin{align*}
	f(1) &= h(\hat \boldp(x))
	= KL (\boldP_k(x) \| \boldM_k )|_{\boldp = \hat \boldp(x)}\,, \\
	f(0) &= h(\tilde \boldp(x))
	= KL (\boldP_k(x) \| \boldM_k)|_{\boldp = \tilde \boldp(x)}
	+\frac{\partial}{\partial \boldp} KL (\boldP_k(x) \| \boldM_k )|_{\boldp = \tilde \boldp(x)} (\hat \boldp(x) - \tilde \boldp(x))\\
	&= KL (\boldP_k(x) \| \boldM_k )|_{\boldp = \tilde \boldp(x)} 
	- \underbrace{\frac{\partial}{\partial \boldp} \E \big[ \log L_{\boldM_k} (\boldp ; \boldT_n(x)) \big]\big|_{\boldp = \tilde \boldp(x)}}_{ = 0 \text{ by definition of } \tilde \boldp(x)} (\hat \boldp(x) - \tilde \boldp(x))\\
	&= KL (\boldP_k \| \boldM_k )|_{\boldp = \tilde \boldp(x)}\, ,\\
	f'(t) &= \frac{\partial h}{\partial \boldp}(t \hat \boldp(x) + (1-t) \tilde \boldp(x)) (\hat \boldp(x) - \tilde \boldp(x))\\
	& = (\hat \boldp(x) - [t \hat \boldp(x) + (1-t) \tilde \boldp(x)])^\top \frac{\partial^2}{\partial \boldp^2} KL (\boldP_k(x) \| \boldM_k )|_{t \hat \boldp(x) + (1-t) \tilde \boldp(x)} (\hat \boldp(x) - \tilde \boldp(x))\\
	& = (1-t) (\hat \boldp(x) - \tilde \boldp(x))^\top \frac{\partial^2}{\partial \boldp^2}
	\E \big[ - \log L_{\boldM_k} (\boldp ; \boldT_n(x)) \big]\big|_{t \hat \boldp(x) + (1-t) \tilde \boldp(x)} (\hat \boldp(x) - \tilde \boldp(x)) \, .
	\end{align*}
	We apply Lemma \ref{lem:univariate_mean_value_theorem} to the functions $f$ and $g : t \mapsto (t-1)^2$. There exists $c_{n,1}(x) \in (0,1)$ such that $( f(1) - f(0) ) g'(c_{n,1}(x)) = ( g(1) - g(0) ) f'(c_{n,1}(x))$, i.e.
	\begin{align*}
	KL (\boldP_k(x) &\| \boldM_k)|_{\boldp = \hat \boldp(x)}
	- KL (\boldP_k \| \boldM_k )|_{\boldp = \tilde \boldp(x)}\\
	&= -\frac{1}{2} (\hat \boldp(x) - \tilde \boldp(x))^\top \frac{\partial^2}{\partial \boldp^2}
	\E \big[ \log L_{\boldM_k} (\boldT_n(x)) \big]\big|_{c_{n,1}(x) \hat \boldp(x) + (1-c_{n,1}(x)) \tilde \boldp(x)} (\hat \boldp(x) - \tilde \boldp(x)) \, .
	\end{align*}
	This concludes the proof.
\end{proof}

\begin{proof}[Proof of Lemma \ref{lem:asymptotic_expansion_p*}]
	We consider $f(t) = h(t \tilde \boldp(x) + (1-t) \hat \boldp(x))$, for $t \in [0,1]$ where $h$ is defined as
	\[
	h(\boldp) =
	\log L_{\boldM_k} (\boldp ; \boldT_n(x))
	+\frac{\partial}{\partial \boldp} \log L_{\boldM_k} (\boldp ; \boldT_n(x)) (\tilde \boldp(x) - \boldp(x)) \, .
	\]
	After some calculations we obtain
	\begin{align*}
	&f(1) = h(\tilde \boldp(x)) = \log L_{\boldM_k} ( \tilde \boldp(x) ; \boldT_n(x))\,, \\
	&f(0)= h(\hat \boldp(x)) = \log L_{\boldM_k} (\hat \boldp(x) ; \boldT_n(x))
	+ \underbrace{\frac{\partial}{\partial \boldp} \log L_{\boldM_k} (\hat \boldp(x) ; \boldT_n(x))}_{= 0 \text{ by definition of } \hat \boldp(x) } (\tilde \boldp(x) - \hat \boldp(x))\,,\\
	&f'(t) = \frac{\partial h}{\partial \boldp}(t \tilde \boldp(x) + (1-t) \hat \boldp(x)) (\tilde \boldp(x) - \hat \boldp(x))\\
	& = (\tilde \boldp(x) - [t \tilde \boldp(x) + (1-t) \hat \boldp(x)])^\top \frac{\partial^2}{\partial \boldp^2} \log L_{\boldM_k} (t \tilde \boldp(x) + (1-t) \hat \boldp(x) ; \boldT_n(x)) (\tilde \boldp(x) - \hat \boldp(x))\\
	& = (1-t) (\tilde \boldp(x) - \hat \boldp(x))^\top \frac{\partial^2}{\partial \boldp^2} \log L_{\boldM_k} (t \tilde \boldp(x) + (1-t) \hat \boldp(x) ; \boldT_n(x)) (\tilde \boldp(x) - \hat \boldp(x))\,.
	\end{align*}
	We apply Lemma \ref{lem:univariate_mean_value_theorem} to the functions $f$ and $g : t \mapsto (t-1)^2$. There exists $c_n^2(x) \in (0,1)$ such that $(f(1) - f(0)) g'(1-c_{n,2}(x)) = (g(1) - g(0)) f'(1-c_{n,2}(x))$, i.e.
	\begin{align*}
	\log &L_{\boldM_k} (\tilde \boldp(x) ; \boldT_n(x))
	- \log L_{\boldM_k} (\hat \boldp(x) ; \boldT_n(x))\\
	&= \frac{1}{2} (\tilde \boldp(x) - \hat \boldp(x))^\top \frac{\partial^2}{\partial \boldp^2} \log L_{\boldM_k} (c_{n,2}(x) \hat \boldp(x) + (1-c_{n,2}(x)) \tilde \boldp(x) ; \boldT_n(x)) (\tilde \boldp(x) - \hat \boldp(x)) \,.
	\end{align*}
	This concludes the proof.
\end{proof}

Combining Equations \eqref{eq:kullback_leibler_approximation_expectation_hat_p}, \eqref{eq:Taylor_KL_tilde_p}, and \eqref{eq:kullback_leibler_approximation_hat_p}, we obtain the following expression for the KL between $\boldP_k(x)$ and $\boldM_k$ evaluated at $\boldp = \hat \boldp(x)$:
\[
\E[KL (\boldP_k(x) \| \boldM_k )|_{\boldp = \hat \boldp(x)}]
= \E \big[ \log L_{\boldP_k(x)} (\boldT_n(x)) \big]
+ \E[-\log L_{\boldM_k} (\hat \boldp ; \boldT_n(x)]
+ \E[\psi_n(x)]
\]
with $\psi_n(x)$ given by
\begin{align*}
\psi_n(x)
=
&- \frac{1}{2} (\hat \boldp(x) - \tilde \boldp(x))^\top 
\frac{\partial^2}{\partial \boldp^2} \E \Big[\log L_{\boldM_k}(\boldp ; \boldT_n(x)) \Big] \Big|_{c_{n,1}(x) \hat \boldp(x) + (1-c_{n,1}(x)) \tilde \boldp(x)} (\hat \boldp(x) - \tilde \boldp(x))^\top\\
&- \frac{1}{2} (\hat \boldp(x) - \tilde \boldp(x))^\top 
\frac{\partial^2}{\partial \boldp^2} \log L_{\boldM_k}(\boldp ; \boldT_n(x)) \Big|_{c_{n,2}(x) \hat \boldp(x) + (1-c_{n,2}(x)) \tilde \boldp(x)} (\hat \boldp(x) - \tilde \boldp(x))^\top\,.
\end{align*}

In order to prove Theorem \ref{main-theo:cv_KL_chi_square} we establish the uniform convergence of $\psi_n(x)$ to a constant process following a chi-square distribution with $s$ degrees of freedom.

\begin{lem}\label{lem:conv_bilinear_form}
	Under the assumptions of Proposition \ref{main-prop:CLT_X(k)} the following convergence holds in $\ell^\infty([\frac{1}{1+\tau}, 1+\tau])$:
	\[
	\psi_n(x)
	\overset{d}{\to} \psi\,, \quad n \to \infty\,.
	\]
	where $\psi$ is a constant process over $x$ following a chi-square distribution with $s$ degrees of freedom.
\end{lem}

\begin{proof}[Proof of Lemma \ref{lem:conv_bilinear_form}]
	We start from the second order derivative of the log-likelihood $\log L_{\boldM_k}$,
	\[
	\frac{\partial^2}{\partial \boldp^2} \log L_{\boldM_k}(\boldp ; \boldT_n(x)) =  - \Diag \Big(\frac{T_{n, 1}(x)}{p_1^2}, \ldots, \frac{T_{n, s}(x)}{p_s^2} \Big) - \frac{\sum_{i=s+1}^{r} T_{n,i}(x) }{(1-\sum_{i=1}^s p_i)^2} \boldone \cdot \boldone^\top\,,
	\]
	where $\boldone = (1, \ldots, 1)^\top \in \R^s$. This gives the expression
	\begin{equation}\label{eq:psi_n}
	\psi_n(x)
	= k \sum_{j=1}^{s} \frac{(T_{n,j}(x)/k - \tilde p_{j}(x))^2}{p_{n, j}(x)} A_{n, j}(x)
	+ \Big( \frac{\sum_{j=s+1}^{r} (T_{n,j}(x) / k - \tilde p_{j}(x))}{\sum_{j=s+1}^{r} p_{n, j}(x)} \Big)^2 B_n(x)\,,
	\end{equation}
	where $A_{n, j} (x)$ corresponds to
	\[
	\dfrac{p_{n, j}(x) T_{n,j}(x) / k}{2(c_{n,2}(x) T_{n,j}(x)/k + (1-c_{n,2}(x)) \tilde p_{j}(x))^2} + \dfrac{p_{n,j}(x)^2}{2(c_{n,1}(x) T_{n,j}(x)/k + (1-c_{n,1}(x)) \tilde p_{j}(x))^2}\,,
	\]
	and 
	\begin{align*}
	B_n(x)
	&= \dfrac{\sum_{j=s+1}^{r} p_{n,j}(x)  \sum_{j=s+1}^{r} T_{n,j}(x)/k }{ 2(c_{n,2}(x) \sum_{j=s+1}^{r} T_{n,j}(x)/k + (1-c_{n,2}(x)) \sum_{j=s+1}^r \tilde p_{j}(x))^2 }\\
	& \hspace{3cm} + \dfrac{(\sum_{j=s+1}^{r} p_{n,j}(x))^2 }{ 2(c_{n,1}(x) \sum_{j=s+1}^{r}T_{n,j}(x)/k + (1-c_{n,1}(x)) \sum_{j=s+1}^r \tilde p_{j}(x))^2 }\,.
	\end{align*}
	Recall that $\tilde p_{j}(x) \sim p_{n, j}(x)$. Then, by Slutsky's lemma and convergence \eqref{main-eq:conv_chi_squared} in the main text it suffices to prove that $A_{n, j}$ and $B_n$ converges to $1$ in probability uniformly in $x$.
	
	The uniform convergence established in Theorem \ref{main-theo:CLT} entails that for any $j$ such that $\beta_j \in {\cal S}_\infty$ we have the convergence in probability
	\begin{equation}\label{eq:conv_functions_psi_j}
	\Big| \frac{T_{n, j}(x) }{k p_{n, j}(x)} - 1 \Big|
	= \frac{1}{\sqrt {k p_{n,j}(x)}}  \Big| \sqrt{k} \frac{T_{n, j}(x) / k - p_{n, j}(x)}{\sqrt{p_{n,j}(x)}} \Big|
	\to 0\,,
	\end{equation}
	uniformly over $x \in [\frac{1}{1+\tau}, 1 +\tau]$.
	Therefore, for any $c_n(x) \in (0,1)$ we have
	\begin{align}
	&\Big| \frac{c_n(x) T_{n, j}(x) / k + (1-c_n(x)) \tilde p_{j}(x)}{p_{n, j}(x)} - 1 \Big| \nonumber\\
	&\hspace{2cm} = \Big| c_n(x) \Big(\frac{T_{n, j}(x) }{k p_{n, j}(x)} - 1\Big) + (1-c_n(x)) \Big(\frac{\tilde p_{j}(x)}{p_{n, j}(x)} - 1\Big)\Big| \nonumber\\
	&\hspace{2cm} \leq \Big| \frac{T_{n, j}(x) }{k p_{n, j}(x)} - 1 \Big| + \Big| \frac{\tilde p_{j}(x)}{p_{n, j}(x)} - 1 \Big|
	 \to 0\,,\label{eq:conv_functions_psi_j_bis}
	\end{align}
	uniformly over $x \in [\frac{1}{1+\tau}, 1 +\tau]$. The uniform convergence \eqref{eq:conv_functions_psi_j} and \eqref{eq:conv_functions_psi_j_bis} imply the uniform convergence of $A_{n,j}(x)$ to $1$. Similar arguments provide the uniform convergence of $B_n(x)$ to $1$.
	
	This proves the weak convergence in $\ell^\infty([\frac{1}{1+\tau},1+\tau])$ of $\psi_n(x)$ to a constant process following a chi-square distribution with $s$ degrees of freedom.
\end{proof}

Taking $x = u_n / |\boldX_{(k+1)}|$ we obtain
\[
\E[KL(\boldP_k \| \boldM_k )|_{\boldp = \hat \boldp}]
- \E \big[ \log L_{\boldP_k} (\boldT_{n,k}) \big]
+ \E[\log L_{\boldM_k} (\hat \boldp ; \boldT_{n,k})]
=\E[\psi_n(u_n / |\boldX_{(k+1)}|)]\,,
\]
with $\psi_n(u_n / |\boldX_{(k+1)}|) \stackrel{d}{\to} \psi$ as $n \to \infty$, where $\psi$ follows a chi-square distribution with $s$ degrees of freedom.

We now prove that the former convergence holds in expectation. For any $x \in [\frac{1}{1+\tau},1+~\tau]$  the vector $\boldT^{s,r}_n(x)$ satisfies Lindeberg condition after a normalization and therefore is uniformly integrable. In order to prove that $\psi_n(x)$ converges  in expectation it suffices to prove that the variables $A_{n,j}(x)$ and $B_n(x)$ have bounded second order moments.

We deal with the terms in $A_{n,j}(x)$. If $c_{n,2}(x) \geq 1/2$ we write
\[
c_{n,2}(x) \frac{T_{n,j}(x)}{k} + (1-c_{n,2}(x)) \tilde p_{j}(x)
\geq c_{n,2}(x) \frac{T_{n,j}(x)}{k}
\geq \dfrac{1}{2} \dfrac{T_{n,j}(x)}{k}\,,
\]
while if $c_{n,2}(x) < 1/2$ we write
\[
c_{n,2}(x) \frac{T_{n,j}(x)}{k} + (1-c_{n,2}(x)) \tilde p_{j}(x)
\geq (1-c_{n,2}(x)) \tilde p_{j}(x)
\geq \dfrac{1}{2} \tilde p_{j}(x)\,.
\]
This leads to the following bound for $A_{n,j}(x)$:
\begin{align*}
A_{n,j}(x)
&\leq
\dfrac{p_{n,j}(x) T_{n,j}(x)/k}{2[(1/2) T_{n,j}(x) / k]^2} \indic_{\{c_{n,2}(x) \geq 1/2\}}
+ \dfrac{p_{n,j}(x) T_{n,j}(x)/k}{2[(1/2) \tilde p_{j}(x)]^2} \indic_{\{c_{n,2}(x) > 1/2\}}\\
&\hspace{1cm} +\dfrac{p_{n,j}(x)^2}{2[(1/2) T_{n,j}(x) / k]^2} \indic_{\{c_{n,2}(x) \geq 1/2\}}
+\dfrac{p_{n,j}(x)^2}{2[(1/2) \tilde p_{j}(x)]^2} \indic_{\{c_{n,2}(x) > 1/2\}}\\
&\leq \dfrac{2 p_{n,j}(x) }{T_{n,j}(x)/k}
+ \dfrac{2 T_{n,j}(x)/k}{p_{n,j}(x)}
+\dfrac{2 p_{n,j}(x)^2}{(T_{n,j}(x)/k)^2}
+2
=: K_1 + K_2 + K_3 + 2\,.
\end{align*}
where we used that
\[
\dfrac{p_{n,j}}{\tilde p_{j}}
= \dfrac{p_{n,j}}{\frac{p_{n,j}}{\sum_{j=1}^r p_{n,j}}}
= \sum_{j=1}^r p_{n,j}
\leq 1\,.
\]
We use the bound established by \cite{cribari_et_al_2000} which gives for any integer $l$,
\[
\E\Big[ \Big(\dfrac{2 p_{n,j}(x) }{T_{n,j}(x)/k}\Big)^l \Big]
= 2 k^l p_{n,j}(x)^l  \E\Big[ \dfrac{1}{T_{n,j}(x)^l} \Big]
\leq
C_1 \dfrac{k^l p_{n,j}(x)^l}{n^l \P( \boldX/u_n \in A(x, \beta_j))^l}
= C_1 \dfrac{k^l}{a^l_n(x)}\,,
\]
where $C_1$ is a constant and where we recall that $a_n(x) = n \P(x | \boldX| > u_n)$. Karamata's theory on regular variation ensures that the ratio $k/a_n(x)$ converges uniformly to $x^\alpha$, and thus that it is bounded. Using this bound with $l=1$ and $l=2$ entails that $\E[K_1^2]$ and $\E[K_3^2]$ are bounded.

Regarding the term $K_2$, its second moment satisfies
\begin{align*}
\E[K_2^2]
&= 
\dfrac{4 \E[T_{n,j}(x)^2]}{k^2 p_{n,j}(x)^2}\\
&=
4 \dfrac{n \P(\boldX/u_n \in A(x, \beta_j))(1-\P(\boldX/u_n \in A(x, \beta_j))) + n^2 \P(\boldX/u_n \in A(x, \beta_j))^2}{k^2 p_{n,j}(x)^2}\\
& \leq
4 \dfrac{n \P(\boldX/u_n \in A(x, \beta_j)) + n^2 \P(\boldX/u_n \in A(x, \beta_j))^2}{k^2 p_{n,j}(x)^2}\\
& = 4 \dfrac{a_n(x) p_{n,j}(x) + a_n(x)^2 p_{n,j}(x)^2}{k^2 p_{n,j}(x)^2}\\
& = 4 \Big( \dfrac{a_n(x)}{k^2 p_{n,j}(x)} + \dfrac{a_n(x)^2}{k^2} \Big)\,.
\end{align*}
We know that $k p_{n,j}(x) \to \infty$ since we assume $\beta_j \in \mathcal{S}_\infty$ and that $a_n(x)/k $ converges uniformly to $x^\alpha$. Using again Karamata's theory on regular variation we obtain that $\E[K_2^2]$ is bounded by a constant $C_2$.

Therefore, we have proved that $\E[|A_{n,j}(x)|^2]$ is bounded. Similar arguments ensure that $\E[|B_{n}(x)|^2]$ is also bounded. This holds for any $x \in [\frac{1}{1+\tau}, 1 +\tau]$ and therefore we obtain the uniform integrability of $\psi_n(u_n / |\boldX_{(k+1)}|)$ which gives the convergence in expectation
\[
\E[KL(\boldP_k \| \boldM_k )|_{\boldp = \hat \boldp}]
- \E \big[ \log L_{\boldP_k} (\boldT_{n,k}) \big]
+ \E[\log L_{\boldM_k} (\hat \boldp ; \boldT_{n,k})]
\to s\,,
\quad n \to \infty\,.
\]
This concludes the proof of Theorem \ref{main-theo:cv_KL_chi_square}.

\subsection{Level selection}\label{app:threshold_selection}

We recall the assumptions \ref{main-ass:bias_T}, \ref{main-ass:bias_k}, and \ref{main-ass:bias_nb_clusters_vs_nb_extremes} of the main text.
\begin{enumerate}[label=\textbf{(B\arabic*)}]
	\item\label{SM-ass:bias_T} For $k \in K$ and $\beta_j \in \mathcal{S}_\infty$ we have
	\[
	\dfrac{\E[T_{n,n-T'_{n, 2^d}, j} \mid T'_{n, 2^d}]}{n - T'_{n, 2^d}}
	= \dfrac{\E[T_{n,k, j}]}{k} + O(1)\,, \quad n \to \infty\,.
	\]
	\item\label{SM-ass:bias_k} For $n$ sufficiently large, $k \in K$, there exist $c$, $C>0$ such that
	\[
	c n q_n \leq k \leq C n q_n\,.
	\]
\end{enumerate}

The likelihood $L_{\boldM'_n} (\boldp' ; \boldT'_n)$ of the model $\boldM'_n$ is given by
\[
\dfrac{n!}{\prod_{j=1}^{2^d} T'_{n, j}!} \Big[ \prod_{j=1}^{s'} (p'_j q')^{T'_{n, j}} \Big] \Big[ \prod_{j=s' + 1}^{r'} (p' q')^{T'_{n, j}} \Big] (1 - q')^{T'_{n, 2^d}} \,.
\]

The likelihood is maximal when $ r'=\hat s_n$ and when the $(T'_{n, j})_{1\le j\le 2^d-1}$ are ordered. We will work under these conditions hereafter without changing the notation for simplicity. Using the relation $\sum_{j=1}^{r'} T'_{n,j} = n-T'_{n, 2^d}$ we obtain the following expression for the log-likelihood $\log L_{\boldM'_n}$:
\begin{align}
	\log L_{\boldM'_n} (\boldp' ; \boldT'_n)
	&= \log(n!) - \sum_{j=1}^{2^d} \log(T'_{n, j}!)
	+ \sum_{j=1}^{s'} T'_{n, j} \log(p'_j)
	+ \Big(\sum_{j=s' + 1}^{r'} T'_{n, j} \Big) \log(p') \nonumber\\
	 &+ T'_{n, 2^d} \log(1 - q') + (n-T'_{n, 2^d}) \log(q')\nonumber\\
	&=: \log L_{\boldM_{n- T'_{n, 2^d}}}(\boldp, \boldT_{n, n-T'_{n, 2^d}})
	+ \log L_{\boldB_n}(q', T'_{n, 2^d}) \label{eq:log_likelihood_M'_n_conditional}\,,
\end{align}
introducing the likelihood of a binomial model
\[
L_{\boldB_n}(q' ; T'_{n, 2^d}) = \dbinom{n}{T'_{n, 2^d}} {q'}^{n-T'_{n, 2^d}} (1-q')^{T'_{n, 2^d}}\,.
\]

%
Similarly to Akaike's procedure our aim is to study the expectation of the Kullback-Leibler divergence 
\[
\E[KL(\boldM'_n \| \boldT'_n )|_{\widehat{\boldp'}}]
= \E[\log L_{\boldP'_{n}}(T'_n)]
+ \E \bigg[ 
\E[-\log L_{\boldM'_{n}}(\boldp', T'_n)] \mid_{\boldp' = \widehat{\boldp'}}
\bigg]\,,
\]
where we recall that $\widehat{\boldp'}$ denotes the maximum likelihood estimator of $\boldp'$.
This encourages us to look at
\begin{align*}
	\E \bigg[ 
	\E[-&\log L_{\boldM'_{n}}(\boldp', T'_n)] \mid_{\boldp' = \widehat{\boldp'}}
	\bigg]\\
	= \E& \bigg[ 
	\E[- \log L_{\boldM_{n- T'_{n, 2^d}}}(\boldp, \boldT_{n, n-T'_{n, 2^d}}) \mid T'_{n, 2^d}]\mid_{\boldp' = \widehat{\boldp'}} \bigg] + \E \bigg[ \E[ -\log L_{\boldB_n}(q', T'_{n, 2^d}) ]\mid_{q' = \hat{q}} \bigg]\\
	= E&_1 + E_2\,,
\end{align*}
where we condition with respect to $T'_{n, 2^d}$ and use the decomposition \eqref{eq:log_likelihood_M'_n_conditional}.

For the term $E_1$, we write
\begin{align}
	\E[\log L_{\boldM_{n- T'_{n, 2^d}}}&(\boldp, \boldT_{n, n-T'_{n, 2^d}}) \mid T'_{n, 2^d}] \label{eq:exp_log_lik}\\
	&= \log((n- T'_{n, 2^d})!) - \sum_{j=1}^{2^d-1} \E[\log(T_{n,n-T'_{n, 2^d}, j}!) \mid T_{n, 2^d}'] \nonumber\\
	&+ \sum_{j=1}^{s'} \E[T_{n,n-T'_{n, 2^d}, j} \mid T_{n, 2^d}'] \log(p_j)
	+ \sum_{j=s' + 1}^{r'} \E[T_{n, n-T'_{n, 2^d},j} \mid T_{n, 2^d}'] \log(p) \nonumber\,.
\end{align}
and the following lemma provides an asymptotic expansion of this term.

\begin{lem}
	Under Assumption \ref{SM-ass:bias_T} we have
	\begin{align*}
		\E[\log L_{\boldM_{n- T'_{n, 2^d}}}(&\boldp, \boldT_{n, n-T'_{n, 2^d}}) \mid T'_{n, 2^d}]\\
		&= \log((n- T'_{n, 2^d})!)
		+ \dfrac{n - T'_{n, 2^d}}{k} \E[\log L_{\boldM_k}(\boldp, \boldT_{n, k})]\\
		&+ (n - T'_{n, 2^d})(\log(n - T'_{n, 2^d})+1)- 2 (n - T'_{n, 2^d}) \log(k)+O\big(n-T'_{n, 2^d}\big)\,.
	\end{align*}
\end{lem}

\begin{proof}
	The assumption implies that the last two terms of the right-hand side in \eqref{eq:exp_log_lik} satisfy
	\[
	\sum_{j=1}^{s'} \E[T_{n,n-T'_{n, 2^d}, j} \mid T_{n, 2^d}'] \log(p_j)
	= \dfrac{n - T'_{n, 2^d}}{k} \sum_{j=1}^{s'}  \E[T_{n,k, j}] \log(p_j)+O\big(n - T'_{n, 2^d}\big)
	\]
	and
	\[
	\sum_{j=s' + 1}^{r'} \E[T_{n, n-T'_{n, 2^d},j} \mid T_{n, 2^d}'] \log(p)
	= \dfrac{n - T'_{n, 2^d}}{k} \sum_{j=s' + 1}^{r'} \E[T_{n, k, j}] \log(p)+O\big(n - T'_{n, 2^d}\big)\,.
	\]
	In order to deal with the second term in in \eqref{eq:exp_log_lik} we recall the following Taylor expansions:
	\begin{align*}
	&\log(m!) = m \log(m) - m + O(\log m)\,, \\
	&\E[X \log(X)] = \E[X] \log(\E[X]) + O(1)\,,
	\end{align*}
	where the second one holds for any random variable $X$ following a binomial distribution with parameters $n$ and $p_n$ satisfying $n p_n \to \infty$. We remind that $T_{n,n-T'_{n, 2^d},j} > 0$ for all $1 \leq j \leq r' = r =\hat s_n$ which ensures that the previous expansions can be applied with $m=T_{n,n-T'_{n, 2^d},j} $ or $X=T_{n,n-T'_{n, 2^d},j} $. Using that $n p_{n,j} \to \infty$ and therefore that $\E[T_{n,n-T'_{n, 2^d}, j}^{-1}] = o(1)$ by dominated convergence, we obtain
	\begin{align*}
		\E[\log(&T_{n,n-T'_{n, 2^d}, j}!) \mid T_{n, 2^d}']\\
		&= \E[ T_{n,n-T'_{n, 2^d}, j} \mid T_{n, 2^d}'] \log \E[T_{n,n-T'_{n, 2^d}, j} \mid T_{n, 2^d}'] - \E[T_{n, n-T'_{n, 2^d},j} \mid T_{n, 2^d}']+ O(1)\\
		&= \dfrac{n - T'_{n, 2^d}}{k} \E[T_{n,k, j}] \log\Big( \frac{n - T'_{n, 2^d}}{k} \E[T_{n,k, j}]\Big)
		- \dfrac{n - T'_{n, 2^d}}{k} \E[T_{n,k, j}] +O(1)\,.
	\end{align*}
for any fixed $1 \leq j \leq r$, where we used \ref{SM-ass:bias_T}. Using the same approximation for $\E[\log(T_{n,k, j}!)]$ we get
	\begin{align*}
		\E[\log(&T_{n,n-T'_{n, 2^d}, j}!) \mid T_{n, 2^d}']\\
		&= \dfrac{n - T'_{n, 2^d}}{k} \E[\log(T_{n,k, j}!)]
		+ \dfrac{n - T'_{n, 2^d}}{k} \E[T_{n,k, j}] \log\Big(\frac{n - T'_{n, 2^d}}{k}\Big) + O\big(n-T'_{n, 2^d}\big)\,.
	\end{align*}
Summing these relations for $j=1, \ldots, 2^d-1$ such that $T_{n,k,j} > 0$ we obtain the following expression for \eqref{eq:exp_log_lik}:
\begin{align*}
	\E[\log L_{\boldM_{n- T'_{n, 2^d}}}(&\boldp, \boldT_{n, n-T'_{n, 2^d}}) \mid T'_{n, 2^d}]\\
	&= \log((n- T'_{n, 2^d})!)
	+ \dfrac{n - T'_{n, 2^d}}{k} \E[\log L_{\boldM_k}(\boldp, \boldT_{n, k})]\\
	&- \dfrac{n - T'_{n, 2^d}}{k} \log(k!) + \dfrac{n - T'_{n, 2^d}}{k} \sum_{j=1}^{2^d-1} \E[T_{n,k, j}] \log(\frac{n - T'_{n, 2^d}}{k}) + O\big(n-T'_{n, 2^d}\big)\,.
\end{align*}
The relation $\sum_{j=1}^{2^d-1} \E[T_{n,k, j}] = k$ yields
\begin{align*}
		\E[\log L_{\boldM_{n- T'_{n, 2^d}}}(&\boldp, \boldT_{n, n-T'_{n, 2^d}}) \mid T'_{n, 2^d}]\\
		&= \log((n- T'_{n, 2^d})!)
		+ \dfrac{n - T'_{n, 2^d}}{k} \E[\log L_{\boldM_k}(\boldp, \boldT_{n, k})]\\
		&- \dfrac{n - T'_{n, 2^d}}{k} \log(k!) + (n - T'_{n, 2^d}) \log(\frac{n - T'_{n, 2^d}}{k}) + O\big(n-T'_{n, 2^d}\big)\,.
	\end{align*}
Using Stirling's formula as $k$ and $n-T'_{n, 2^d}$ diverge (a.s. for the second) to infinity with $n$, we obtain 
\begin{align*}
&\log((n- T'_{n, 2^d})!)- \dfrac{n - T'_{n, 2^d}}{k} \log(k!) + (n - T'_{n, 2^d}) \log(\frac{n - T'_{n, 2^d}}{k})\\
&= \log((n- T'_{n, 2^d})!)+ (n - T'_{n, 2^d})(\log(n - T'_{n, 2^d})+1)- 2 (n - T'_{n, 2^d}) \log(k)+O\big(n-T'_{n, 2^d}\big)\,.
\end{align*}
\end{proof}

This lemma proves that the quantity $E_1$ is equal to 
\begin{align*}
E_1
= \dfrac{nq_n}{k} \E \Big[ \E[-\log L_{\boldM_k}(\boldp, \boldT_{n,k})]|_{\boldp = \hat \boldp} \Big]
&- \E[\log(n - T'_{n, 2^d})!]\\
& - nq_n(\log(nq_n)+1)+ 2 nq_n\log(k)+O(nq_n)\,.
\end{align*}

Moving on to term $E_2$, we apply similar arguments than in the proof of Theorem 2.

\begin{lem} For $k \in K$ such that \ref{SM-ass:bias_k} holds we have 
\[
\E[ \log L_{\boldB_n}(nq_n, T'_{n, 2^d})]=- \E[\log L_{\boldB_n}(k/n, T'_{n, 2^d})]
+ \Big(\dfrac{k}{n} - q_n\Big)^2 \dfrac{n}{k/n} + O(k)\,.
\]
\end{lem}
\begin{proof}
In order to apply Lemmas \ref{lem:asymptotic_expansion_hat_p} and \ref{lem:asymptotic_expansion_p*}  we introduce artificially the Kullback-Leibler divergence
\[
KL(D \| \boldB_n)
= \E[\log L_D] - \E[\log L_{\boldB_n}(q' ; T'_{n, 2^d})]\,,
\]
where $L_D$ corresponds to the likelihood of the distribution of the data, that is the distribution of $n-T'_{n, 2^d}$. The derivatives of $\log L_{\boldB_n(q',T'_{n, 2^d})}$ are
\[
\dfrac{\partial}{\partial q'} \log L_{\boldB_n}(q' ; T'_{n, 2^d})
= \dfrac{n-T'_{n, 2^d}}{q'} - \dfrac{T'_{n, 2^d}}{1-q'}\,,
\]
and
\[
\dfrac{\partial^2}{\partial {q'}^2} \log L_{\boldB_n}(q' ; T'_{n, 2^d})
= -\dfrac{n-T'_{n, 2^d}}{{q'}^2} - \dfrac{T'_{n, 2^d}}{(1-q')^2}\,.
\]

We adapt Lemmas \ref{lem:asymptotic_expansion_hat_p} and \ref{lem:asymptotic_expansion_p*} in this context. Lemma \ref{lem:asymptotic_expansion_hat_p} applied with $KL(D \| \boldB_n(q' ; T'_{n, 2^d}))$ between $q' = \widehat{q'}$ and $q' = q_n = \E[T'_{n, 2^d}]/n$ gives
\begin{align*}
KL(D \| \boldB_n(q' ; T'_{n, 2^d})) \mid_{q' = \widehat{q'}}
= KL&(D \| \boldB_n(q' ; T'_{n, 2^d}))\mid_{q' = q_n}\\
&+ \dfrac{1}{2} \Big(\dfrac{k}{n} - q_n\Big)^2 \dfrac{\partial^2}{\partial {q'}^2} KL(D \| \boldB_n(q' ; T'_{n, 2^d}))  \mid_{q = c_n \widehat{q'} + (1-c_n) q_n}\,,
\end{align*}
which gives
\begin{align*}
	KL(D \| \boldB_n(q' ; T'_{n, 2^d})) \mid_{q' = \widehat{q'}}
	= KL(D \| \boldB_n(q' ; T'_{n, 2^d})) \mid_{q' = q_n}
	+ \dfrac{1}{2} (\widehat{q'} - q_n)^2 \kappa_n(c_n)\,,
\end{align*}
with
\[
\kappa_n(c)
= \Big(\dfrac{n q_n}{(c \widehat{q'} + (1-c) q_n)^2} + \dfrac{n (1-q_n)}{[1-(c \widehat{q'} + (1-c) q_n)]^2} \Big)\,, \quad c \in (0,1)\,.
\]
We now mimic Lemma \ref{lem:asymptotic_expansion_p*} with $\log L_{\boldB_n}(q' ; T'_{n, 2^d})$ between $q' = k/n$ and $q'= q_n$ gives
\begin{align*}
	\log L_{\boldB_n}(q_n; T'_{n, 2^d})
	= \log L_{\boldB_n}(k/n ; T'_{n, 2^d})
	+ \Big(\dfrac{k}{n} - q_n\Big) \dfrac{\partial}{\partial q'} \log &L_{\boldB_n}(k/n ; T'_{n, 2^d})\\
	&+ O\Big(\Big(\dfrac{k}{n} - q_n\Big)^2\Big)\,,
\end{align*}
and the first order term appears since the optimum of the log-likelihood is not $k/n$. We obtain
\begin{align*}
	\E[\log L_{\boldB_n}(q_n ; T'_{n, 2^d})]
	&= \E[\log L_{\boldB_n}(k/n ; T'_{n, 2^d})]
	+ \Big(\dfrac{k}{n} - q_n\Big) \Big( \dfrac{n q_n}{k/n} - \dfrac{n (1-q_n)}{1-k/n} \Big)\,.
\end{align*}

Gathering together both results leads to
\begin{align*}
	&KL(D \| \boldB_n(q' ; T'_{n, 2^d})) \mid_{q' = \widehat{q'}}\\
	&= KL(D \| \boldB_n(q' ; T'_{n, 2^d})) \mid_{q' = q_n} + \dfrac{1}{2} (\widehat{q'} - q_n)^2 \kappa_n(c_n)\\
	&= \E[\log L_D] - \E[\log L_{\boldB_n}(k/n ; T'_{n, 2^d})]
	- \Big(\dfrac{k}{n} - q_n\Big) \Big( \dfrac{n q_n}{k/n} - \dfrac{n (1-q_n)}{1-k/n} \Big) + O\Big(\Big(\dfrac{k}{n} - q_n\Big)^2\Big)\,.
\end{align*}
The second order term is of smaller order than the first order one since $k/n$ converge to zero and so does $q_n$ by assumption \ref{SM-ass:bias_k}. So all in all the variations with respect to $k$ of the KL are the same as the ones of
\begin{align*}
- \E[\log L_{\boldB_n}(k/n ; T'_{n, 2^d})]
&- \Big(\dfrac{k}{n} - q_n\Big) \Big( \dfrac{n q_n}{k/n} - \dfrac{n (1-q_n)}{1-k/n} \Big)\\
&=- \E[\log L_{\boldB_n(q',T'_{n, 2^d})}(k/n)]
+ \Big(\dfrac{k}{n} - q_n\Big)^2 \dfrac{n}{k/n} + O(k)\,,
\end{align*}
after using the asymptotic expansion
\[
\dfrac{n q_n}{k/n} - \dfrac{n (1-q_n)}{1-k/n} 
= \dfrac{n q_n}{k/n} - n +O(k)
= \Big(q_n - \dfrac{k}{n}\Big) \dfrac{n}{k/n} + O(k)\,,
\]
as $q_n$ and $k/n$ converge to $0$. This gives the desired result.
\end{proof}

Going back to the whole model, we get
\begin{align*}
	\E& \bigg[ 
	\E[-\log L_{\boldM'_{n}}(\boldp', T'_n)] \mid_{\boldp' = \widehat{\boldp'}}
	\bigg]
	= E_1 + E_2\\
	&= \dfrac{nq_n}{k} \E \Big[ \E[-\log L_{\boldM_k}(\boldp, \boldT_{n,k})]|_{\boldp = \hat \boldp} \Big]
	- \E[\log(n - T'_{n, 2^d})!]
	- nq_n(\log(nq_n)+1)+ 2 nq_n\log(k)\\
	&- \E[ \log L_{\boldB_n}(k/n ; T'_{n, 2^d})]
	+ \Big(\dfrac{k}{n} - q_n\Big)^2 \dfrac{n}{k/n}+ O(k)\,,
\end{align*}
where we recall that
\begin{align*}
- \E[ \log L_{\boldB_n}(k/n ; T'_{n, 2^d})]
&= -\log(n!) + \E[\log(n-T'_{n, 2^d})!]\\
&+ \E[\log(T'_{n, 2^d}!)] - nq_n \log(k/n) - n(1-q_n) \log(1-k/n)\,.
\end{align*}
The terms $\E[\log(n - T'_{n, 2^d})!]$  vanish and the terms depending on $k$ which remain are
\[
n \Big[ q_n\log(k/n) - (1-q_n) \log(1-k/n) + \Big(\dfrac{k}{n} - q_n\Big)^2 \dfrac{n}{k} \Big]\,.
\]
Under Assumption \ref{SM-ass:bias_k} the previous terms can then be simplified as
\begin{align*}
q_n\log(k/n) - (1-q_n) \log(1-k/n) + \Big(\dfrac{k}{n} - q_n\Big)^2 \dfrac{n}{k}
&= q_n\log(k/n) + \dfrac{k}{n} + \dfrac{k}{n} - 2 q_n + q_n^2 \dfrac{n}{k}\\
&= q_n\log(k/n) + O(q_n)\,.
\end{align*}
All in all we have proved that
\begin{align*}
\E \bigg[ 
\E[-\log L_{\boldM'_{n}}(\boldp', T'_n)] \big|_{\boldp' = \widehat{\boldp'}}
\bigg]
= \dfrac{nq_n}{k} \E \Big[ \E[-\log L_{\boldM_k}(\boldp, \boldT_{n,k})]|_{\boldp = \hat \boldp} \Big]
+ n&q_n \log(k/n)\\
&+ O(nq_n)\,.
\end{align*}
\section{Numerical examples with asymptotic independence} \label{subsec:asymptotic_indep}

The example we develop here is related to asymptotic independence. The code can be found at \url{https://drive.google.com/drive/folders/11TvhbVMPXcSkxmdnnAySvZt64lpMKZqL?usp=sharing}.

Let $\boldX = (X_1, \ldots, X_d)^\top$ be a random vector with a Gaussian copula with a common correlation parameter $\rho <1$ and marginal distributions satisfying $\P(X_j > x) = x^{-1}$. Then $\boldX$ is regularly varying with tail index $-1$ and its marginals are asymptotically independent, see \cite{resnick_87}, Corollary 5.28. The spectral measure only places mass on the subsets $C_\beta$ such that $|\beta|=1$, and so does the distribution of $\boldZ$, see Example 1 in the main text. The aim of our procedure is then to recover these $d$ directions among the $2^d-1$ clusters. These directions are all the more identifiable as the parameter $\rho$ is close to $0$ (no dependence). 

We first consider $d=40$, a sample size $n=30\,000$, and a correlation parameter $\rho =0.5$. We plot the evolution of the penalized log-likelihood for a given sample $\boldX_1, \ldots, \boldX_n$. Figure \ref{fig:evolution_s_k_asympt_indep} shows that this quantity first decreases for small values of $k$ before it slightly increases with $k$. The minimum is reached for $\hat k = 1050$ which corresponds to a proportion of extreme values of $\hat k / n = 3.5\%$. Regarding the evolution of $\hat s(k)$, we observe that the value of $\hat s(k)$ remains constant for $k$ close to $\hat k$. It stabilizes around an optimal value of $\hat s(\hat k) = 41$. Recall that in this example the true clusters corresponds to the $d=40$ one-dimensional ones. In turns out that the algorithm identifies the $40$ one-dimensional clusters. The extra cluster which appears is $\{1, \ldots, d\}$.

\begin{figure}[!th]
	\includegraphics[scale=0.27]{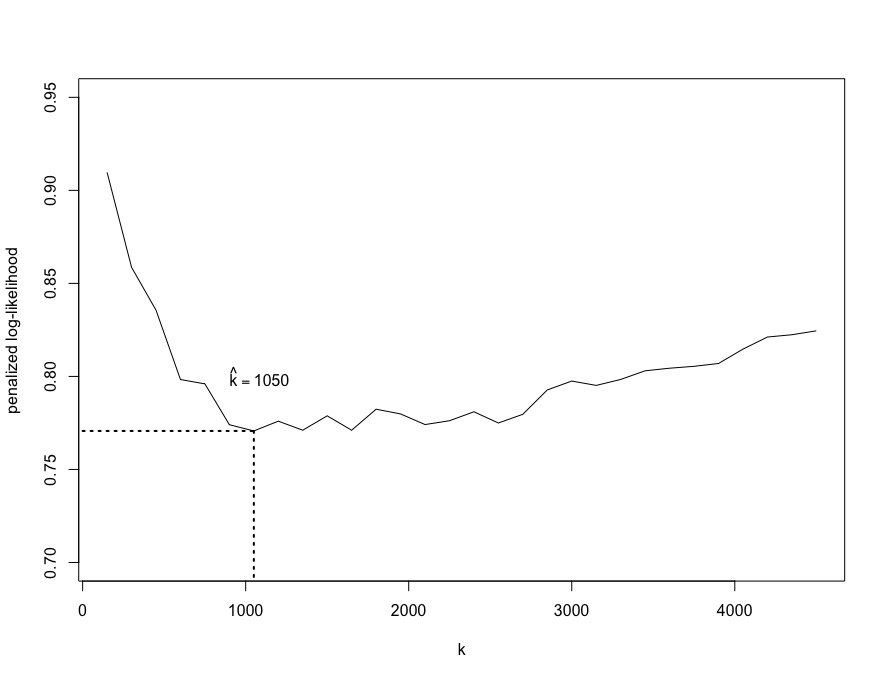}\hfill
	\includegraphics[scale=0.27]{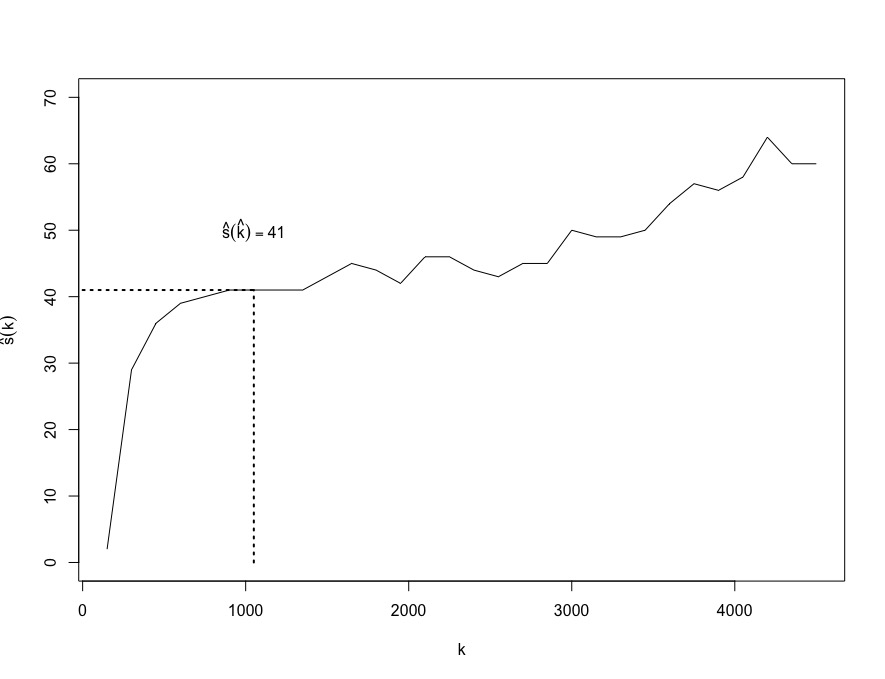}
	\captionof{figure}{Evolution of the penalized log-likelihood (left) and of $\hat s(k)$ (right) with respect to $k$. Here $\rho = 0.5$ and $n=30\,000$. \label{fig:evolution_s_k_asympt_indep}}
\end{figure}

We then compare the estimated probabilities $\hat \boldzeta$ given by our algorithm with the true ones $p^*(\beta) = 1/d$ for $|\beta| = 1$ and zero elsewhere. To do so, we still consider $d=40$ and $n=30\,000$. We apply our procedure for different correlation parameters $\rho \in \{0, 0.25, 0.5, 0.75\}$ and repeat our procedure over $N=100$ simulations. Then we compute the Hellinger distance and we compare ourselves with the approach of \cite{goix_sabourin_clemencon_17}. For the two methods proposed by \cite{simpson_et_al} it is necessary to compute the empirical mass on all $2^d-1$ subsets $C_\beta$ which can not be achieved for such a high dimension. This is why for this example we restrict our comparison with DAMEX.

Figure \ref{fig:boxplot_asympt_indep} shows the mean Hellinger distance achieved by our method and the one of \cite{goix_sabourin_clemencon_17} over $100$ simulations. We observe that the performance of both methods deteriorates as the value of the parameter $\rho$ increases. For any $\rho \in \{0, 0.25, 0.5, 0.75\}$ our approach leads to better results than the one of \cite{goix_sabourin_clemencon_17}.

\begin{figure}[!th]
	\centering
	\includegraphics[scale=0.15]{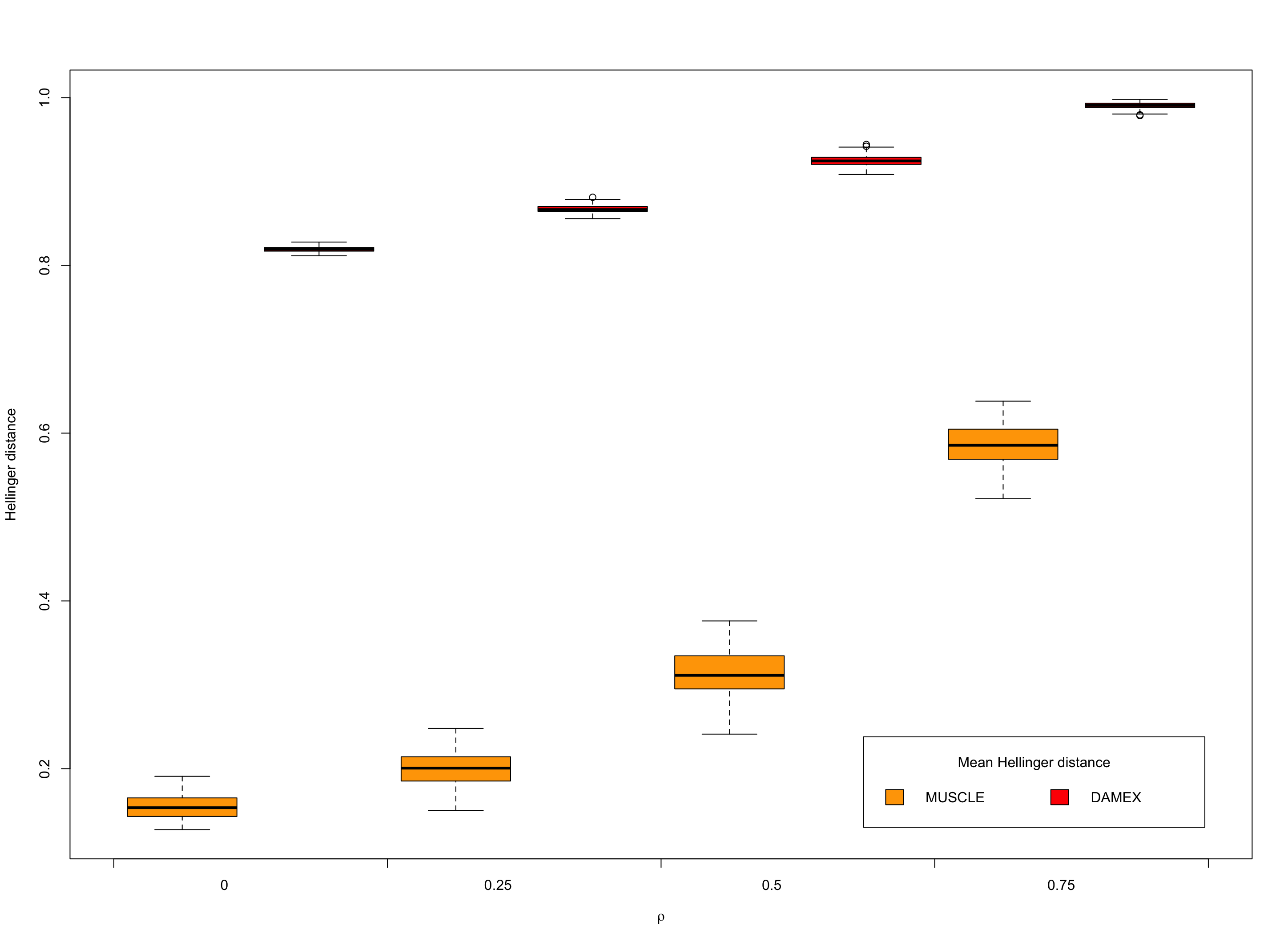}
	\caption{Mean Hellinger distance for $\rho \in \{0, 0.25, 0.5, 0.75\}$ over 100 simulations. In orange: MUSCLE. In red: \cite{goix_sabourin_clemencon_17}. \label{fig:boxplot_asympt_indep}}
\end{figure}

\section{Application to wind speed data}

The data correspond to the daily-average wind speed for 1961-1978 at 12 synoptic meteorological stations in the Republic of Ireland ($n=6574$, $d=12$). They are available at \url{http://lib.stat.cmu.edu/datasets/} and have been analyzed in detail by \cite{haslett_raftery_1989}. The stations are the following ones: Malin Head (Mal), Belmullet (Bel), Clones (Clo), Claremorris (Cla), Mullingar (Mul), Dublin (Dub), Shannon (Sha), Birr (Bir), Kilkenny (Kil), Valentia (Val), Roche's Pt. (Rpt), Rosslare (Ros). Seven of these stations are along the sea: Belmullet (west), Dublin (east), Malin Head (north), Roche's Pt. (south), Rosslare (east), Shannon (west), and Valentia (southwest). The five other stations are more than 50 kilometers away from the coast. We refer to \cite{haslett_raftery_1989} for a map of the stations.

The preprocessing of the data provides a Hill estimator of $\hat \alpha = 10.7$. Before applying MUSCLE we plot the evolution of the penalized log-likelihood as a function of $k$. The optimal value $\hat k = 460$ is clearly identified and corresponds to a proportion $\hat k / n = 7 \%$. This choice of $\hat k$ leads to a number of clusters $\hat s(\hat k)= 11$. Note that contrary to the numerical examples the value of $\hat s(k)$ does not stabilize when $k$ is close to $\hat k$.

\begin{figure}[!th]
	\centering
	\includegraphics[scale=0.17]{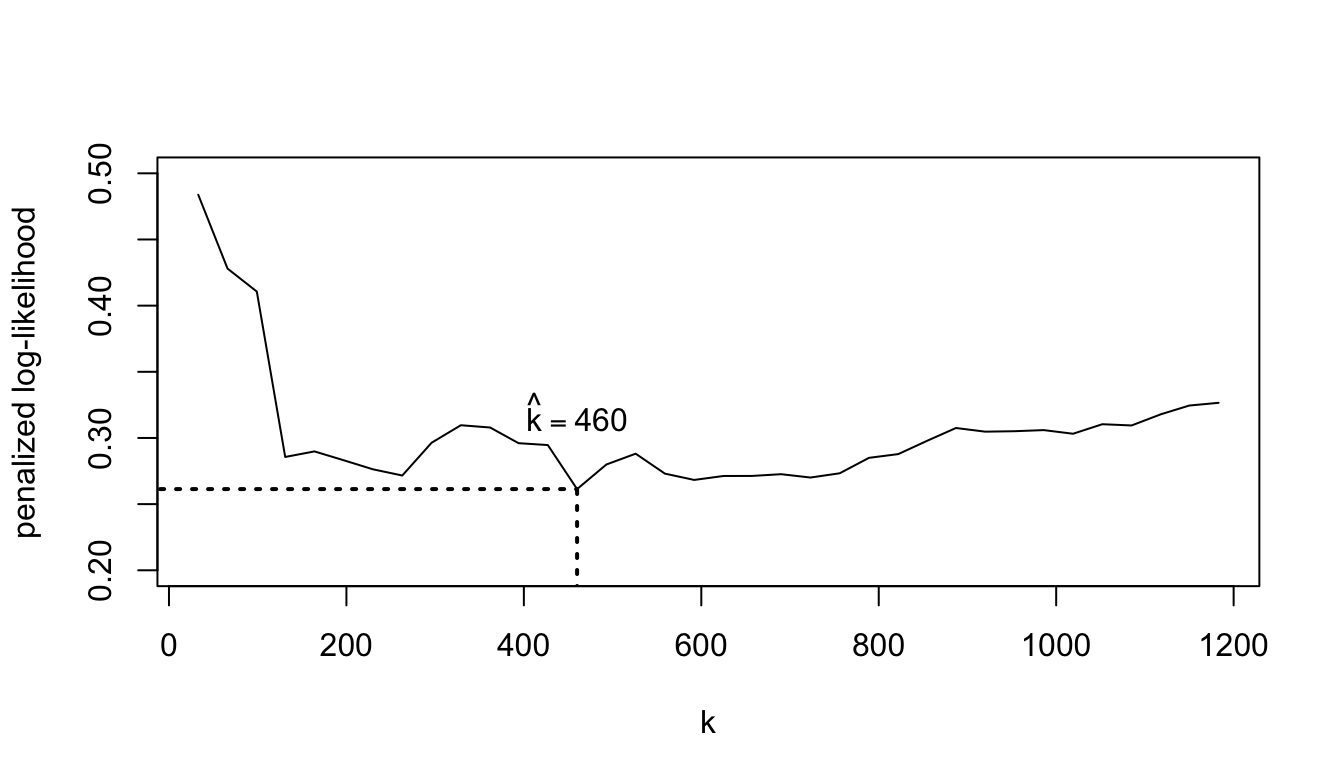}\hfill
	\includegraphics[scale=0.17]{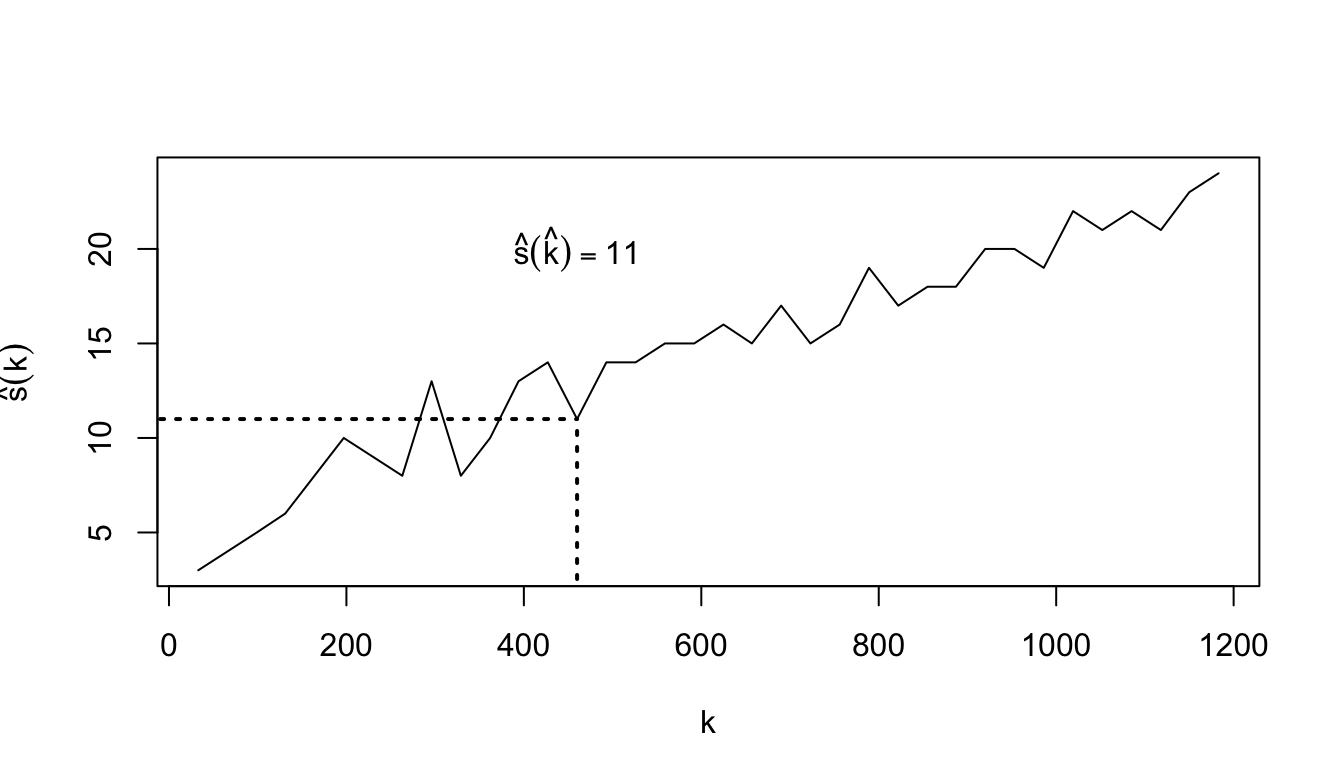}
	\caption{Evolution of the penalized log-likelihood (left) and of $\hat s_n(k)$ (right) with respect to $k$ for the wind speed data. \label{fig:evolution_winspeed}}
\end{figure}

MUSCLE provides $11$ extremal clusters which correspond to $6$ stations: Belmullet, Malin Head, Roche's Pt., Rosslare, Dublin, and Shannon. All of these stations are located close to the sea, where wind speed is likely to be higher than in inland cities. The only coastal city which does not appear in the extremal clusters is Valentia which is located more than 200 kilometers away from the other stations.

The $11$ clusters and their inclusions are illustrated on Figure \ref{fig:features_wind_data}. The algorithm exhibits low-dimensional clusters as the largest ones are of dimension 3. The northern station Malin Head appears in all multivariate clusters. Most of the clusters are related to a specific localization: $\{\text{Sha, Bel, Mal}\}$ and $\{\text{Bel, Mal}\}$ correspond to stations in the north/west, $\{\text{Mal, Dub}\}$ and $\{\text{Mal, Ros}\}$ to stations in the north/east. Three extremal clusters gather the northern station Malin Head and the southern one Roche's Pt.

We conclude that the aforementioned $11$ clusters correspond to subsets $C_\beta$ which gather the mass of the angular vector $\boldZ$. In particular, the subsets related to the clusters $\{\text{Sha, Bel, Mal}\}$,  $\{\text{Rpt, Bel, Mal}\}$, $\{\text{Rpt, Ros, Mal}\}$, and $\{\text{Dub, Mal}\}$ gather some mass of $\boldZ$ and are not included in larger subsets on which $\boldZ$ places mass. Following \cite{meyer_wintenberger}, Theorem 2, these \emph{maximal} subsets also concentrate the mass of the spectral measure. The remaining clusters, which correspond to \emph{non-extremal} subsets, contain almost all the station Malin Head. We interpret this as follows: among the maximal subsets the wind speed in Malin Head is likely to be larger than in the other stations. We also refer to \cite{meyer_wintenberger}, Section 3.2, for a discussion on maximal and non-maximal subsets. A separate study can then be conducted on each group of stations for which standard methods for low-dimensional extremes can be applied, see \cite{coles_tawn_91}, \cite{einmahl_dehaan_huang_93}, \cite{einmahl_97}, \cite{einmahl_segers_09}.

\begin{figure}[!th]
	\centering
	\begin{tikzpicture}
	\tikzstyle{group}=[ellipse, draw]
	
	\node[group] (ShaBelMal) at (2,5) {Sha, Bel, Mal};
	\node[group] (RptBelMal) at (8,5) {Rpt, Bel, Mal};
	\node[group] (RptRosMal) at (14,5) {Rpt, Ros, Mal};
	
	\node[group] (BelMal) at (2,3) {Bel, Mal};
	\node[group] (DubMal) at (6,3) {Dub, Mal};
	\node[group] (RptMal) at (10,3) {Rpt, Mal};
	\node[group] (RosMal) at (14,3) {Ros, Mal};	
	
	\node[group] (Bel) at (1,1) {Bel};
	\node[group] (Mal) at (6,1) {Mal};
	\node[group] (Rpt) at (10,1) {Rpt};
	\node[group] (Ros) at (14,1) {Ros};
	
	\draw (Mal) to (DubMal);
	\draw (Mal) to (BelMal);
	\draw (Mal) to (RptMal);
	\draw (Mal) to (RosMal);
	\draw (Bel) to (BelMal);
	\draw (Rpt) to (RptMal);
	\draw (Ros) to (RosMal);
	
	\draw (BelMal) to (ShaBelMal);
	\draw (BelMal) to (RptBelMal);
	\draw (RptMal) to (RptBelMal);
	\draw (RptMal) to (RptRosMal);
	\draw (RosMal) to (RptRosMal);
	
	\end{tikzpicture}
	\caption{Representation of the $11$ clusters and their inclusions. \label{fig:features_wind_data}}
\end{figure}
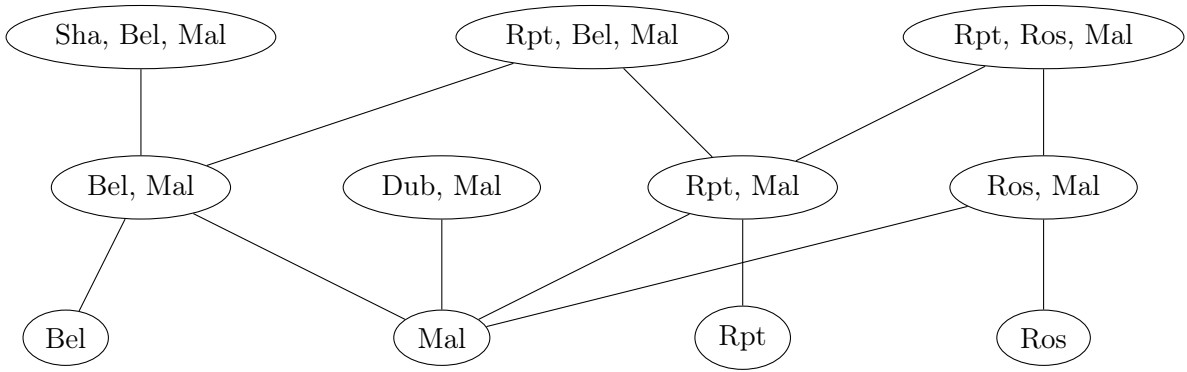

In order to study the remaining stations, we remove the $6$ extremal stations and reapply our procedure. MUSCLE then provides $16$ clusters:
\begin{itemize}
	\item Four one-dimensional clusters: Val, Clo, Cla, Mul.
	\item Five two-dimensional clusters: $\{\text{Val, Cla}\}$, $\{\text{Val, Clo}\}$, $\{\text{Val, Mul}\}$, $\{\text{Mul, Clo}\}$, $\{\text{Cla, Clo}\}$.
	\item Two three-dimensional clusters: $\{\text{Val, Cla, Clo}\}$, $\{\text{Val, Mul, Clo}\}$.
	\item and other clusters: $\{\text{Val, Cla, Mul, Clo}\}$, $\{\text{Val, Bir, Cla, Clo}\}$, $\{\text{Val, Bir, Cla, Mul, Clo}\}$,\\ $\{\text{Val, Kil, Bir, Cla, Mul, Clo}\}$.
\end{itemize}
The station Valentia appears in almost all of these clusters. It is the only remaining coastal station, the other ones are inland ones. No particular tail dependence structure appears for these non-extremal stations. In particular, the largest clusters $\{\text{Val, Kil, Bir, Cla, Mul, Clo}\}$ indicates that it is likely that the wind speed in all of these six stations is simultaneously large.

\bibliographystyle{plainnat}

\end{document}